\documentclass{article}

\usepackage{amsmath,amssymb,amsfonts,bbm,color,bm,nicefrac,
graphicx,enumerate}
\usepackage{amsthm}
\usepackage{epstopdf}
\usepackage{geometry}
\usepackage{caption}
\usepackage[toc,page,title]{appendix}
\usepackage{authblk}

\geometry{letterpaper}

\newtheorem{thm}{Theorem}[section]
\newtheorem{theorem}{Theorem}[section]

\newtheorem{lemma}[thm]{Lemma}

\newtheorem{proposition}[thm]{Proposition}

\newtheorem{definition}{Definition}[section]
\newtheorem{remark}[thm]{Remark}

\newcommand{\comment}[1]{}

\begin{document}
\nocite{*}
\title{Coarsening Dynamics on $\mathbb{Z}^d$ with Frozen Vertices}

\author{M. Damron$^{1}$, 
       S. M. Eckner $^{2}$, 
       H. Kogan$^{2}$,
       C. M. Newman$^{3}$, 
       V. Sidoravicius$^{4}$
       \newline
       \\
       $^1$ Department of Mathematics, Indiana University, Bloomington \\
       $^2$ Courant Institute of Mathematical Sciences \\
       $^3$ Courant Institute of Mathematical Sciences and NYU--Shanghai \\
       $^4$ IMPA, Rio de Janeiro, Brazil
       }
\maketitle

\begin{abstract}
We study 
Markov processes in which $\pm 1$-valued random variables
$\sigma_x(t), x\in \mathbb{Z}^d$, update by taking the value of a majority of their
nearest neighbors or else tossing a fair coin in case of a tie. In the presence of a random
environment of frozen plus (resp., minus) vertices with density $\rho^+$ (resp., $\rho^-$),
we study the prevalence of vertices that are (eventually) fixed plus or fixed minus or
flippers (changing forever). Our main results are that, for $\rho^+ >0$ and $\rho^- =0$,
all sites are fixed plus, while for $\rho^+ >0$ and $\rho^-$ very small (compared to
$\rho^+$), the fixed minus and flippers together do not percolate. We also obtain some results
for deterministic placement of frozen vertices.
\end{abstract}


\section{Introduction}\label{sec:Introduction}

In this work we study the long time behavior of
Markov processes, primarily in continuous time,
whose states assign either $+1$ or $-1$, called a spin value, to each
vertex of the $d$-dimensional lattice $\mathbb{Z}^d$. We will discuss two types of
processes, a much studied one denoted $\sigma(t)= (\sigma_x (t):x\in \mathbb{Z}^d)$ and then
a modified one, denoted $\sigma'(t)$, in which some vertices are ``frozen'' -- that is, their
spin values are not allowed to change.

Before giving complete definitions of $\sigma(t)$ and $\sigma'(t)$, we give brief
descriptions and motivations. $\sigma(t)$ has an energy-lowering dynamics with energy of
the form $ -\sum^* \sigma_x \sigma_y$, where $\sum^*$ denotes the sum over nearest neighbor
pairs of vertices. Here, energy is lowered at the update of $\sigma_x$ if its value is
changed to agree with a strict majority of neighbors. The modified process $\sigma'(t)$
basically corresponds to $\sigma(t)$ in a random environment where randomly or deterministically selected
vertices are frozen from time zero, some plus and some minus.

There are two distinct motivations for studying $\sigma'$. One, explained in more detail
below, comes from the usual $\sigma$ process, with random initial state, but on a slab,
say $\mathbb{Z}^2 \times \{0, \ldots, k-1 \}$, rather than on $\mathbb{Z}^d$. Here,
random rectangular regions of the form $R \times \{0, \ldots, k-1 \}$ are fixed
(thus effectively frozen) from time zero if they start with constant spin value. A second
motivation comes from the energy-lowering dynamics of random-field models with energy
$-\sum^* \sigma_x \sigma_y -\sum_x h_x \sigma_x$ where the $h_x$'s are i.i.d. variables
with common distribution $\rho$. Suppose, for example, that $\rho = \rho^+ \delta_H +
\rho^- \delta_{-H} + (1-\rho^+ -\rho^-)\delta_0$ with $H>2d$,
where $\delta_r$ denotes the unit point measure at $r$. The reader can then check
that at vertex $x$ with $h_x=H$ (resp., $-H$), either from time zero or else after the first
update at $x$, $\sigma_x (t)$ will be fixed plus (resp., minus) and thus
effectively frozen. We
proceed now to define our two processes, $\sigma$ and $\sigma'$, and review some known
results about~$\sigma$.

\section{The two processes}

\subsection{The process $\sigma$}
The stochastic process $\sigma(t) = \sigma (t, \omega)$, where
$\sigma_x (t)$ denotes the value of the spin
at vertex $x \in \mathbb{Z}^d$ at time $t \geq 0$, starts from a random initial configuration
$\sigma (0) =\{\sigma_x (0)\}_{x \in \mathbb{Z}^d}$, drawn from an independent Bernoulli
product measure

\begin{equation}\label{mu}
 \mu_{\theta} (\sigma_x (0) = + 1) = \theta = 1 - \mu_{\theta} (\sigma_x (0)
   = - 1).
\end{equation}

\noindent
The system evolves in continuous time according to an agreement inducing
dynamics: at rate 1, each vertex changes its
value if it disagrees with more than half of its neighbors, and decides its
spin value by tossing a fair coin in the event of a tie. This process
corresponds to the zero-temperature limit of Glauber dynamics
for a stochastic Ising model with ferromagnetic nearest-neighbor interactions
and no external magnetic field (see, for example, \cite{NNS} or \cite{KRB}), having
Hamiltonian (energy function)

\begin{equation}\label{hamiltonian}
 \mathcal{H} = - \sum_{\{x, y\}: \|x - y\| = 1} \sigma_x \sigma_y ,
\end{equation}

\noindent
where $\|x\|$ denotes
the Euclidean norm of $x$.

More precisely, let $\mathcal{S}$ be the state space of configurations $\sigma$,
i.e., $\mathcal{S} =\{- 1, 1\}^{\mathbb{Z}^d}$. The continuous time dynamics can be defined by
means of independent, rate 1 Poisson processes (clocks), one assigned to each vertex $x$. If
the clock at vertex $x$ rings at time $t$ and the change in energy

\begin{equation*}
 \Delta \mathcal{H}_x (\sigma) = 2 \sum_{y : \|x - y\|= 1} \sigma_x
   \sigma_y
\end{equation*}

\noindent
is negative (respectively, positive), a spin flip
(that is, a change of $\sigma_x$) is done with probability 1
(respectively 0). To resolve the case of ties when $\Delta \mathcal{H}_x (\sigma) = 0$,
each clock ring is associated to a fair coin toss and a spin flip is done with
probability 1/2, or equivalently $\sigma_x$ is made to be $+1$ (respectively, $-1$) if
the coin toss comes up heads (resp., tails).
Let $\mathbb{P}_{\mathrm{dyn}}$ be the probability measure for the
realization of the dynamics (clock rings and tie-breaking
coin tosses), and denote by $\mathbb{P}_\theta=
\mu_{\theta} \times \mathbb{P}_{\mathrm{dyn}}$ the joint probability measure on
the space $\Omega$ of initial configurations $\sigma (0)$ and realizations of
the dynamics; an element of $\Omega$ is denoted~$\omega$.

This process has been studied extensively in the physics and mathematics
literature -- primarily on graphs such as the hyperlattice $\mathbb{Z}^d$ and
the homogeneous tree of degree $K$, $\mathbb{T}_K$. A physical motivation,
which corresponds to the symmetric initial spin configuration,
is the behavior of a magnetic system following a deep quench. A deep
quench occurs when a system that has reached equilibrium at an initial high temperature
$T_1$ is instantaneously subjected to a very low temperature $T_2$. Here
we take $T_1 =\infty$ and $T_2 =0$. For references on
this and related problems see, for example, \cite{NNS} or \cite{KRB}.
The main focus in the study of this model is the formation and evolution of
boundaries delimiting same spin cluster domains. These domains shrink or grow or split or coalesce
as their boundaries evolve. This model is often referred to as
a model of \emph{domain coarsening}.

A fundamental question is whether the system
has a limiting configuration, or equivalently does every vertex
eventually stop flipping? Whether

\begin{equation}\label{sigmainfinity}
\lim_{t\rightarrow\infty}\sigma_x(t)
\end{equation}

\noindent
exists for almost every initial configuration, realization of the dynamics and
for all $x\in \mathbb{Z}^d$ depends on $\theta$
and on the dimension $d$. We refer to the existence of the limit~\eqref{sigmainfinity} as {\bf fixation} at $x$.

Nanda, Newman and Stein \cite{NNS}
investigated this question when $d=2$ and $\theta=\frac{1}{2}$
and found that the limit does not exist; that is, every vertex
flips infinitely often. Their work extended an old result
of Arratia \cite{A}, who showed the same happens on $\mathbb{Z}$
(for $\theta\neq 0$ or $1$). It is an open problem to determine what happens
for $d\geq 3$ and $\theta = 1/2$.
One important consequence of the methods of \cite{NNS} is that if each
vertex of the graph has odd degree (for example,
on $\mathbb{T}_K$ for $K$ odd), then $\sigma_x (\infty)$ does exist for almost every initial configuration,
realization of the dynamics and every vertex $x$.

Another question of interest is whether sufficient bias in the initial
configuration leads the system to reach consensus in the limit. That is, does
there exist
$\theta_{\ast}\in(0, 1)$, such that for $\theta \geq \theta_{\ast}$,

\begin{equation}\label{fixation}
\forall x \in G, \mathbb{P}_{\theta} (\exists T = T (\sigma (0), \omega, x) < \infty
\text{ so that } \sigma_x (t) = + 1 \text{ for } t \geq T) = 1?
\end{equation}

\noindent
We will refer to \eqref{fixation} as {\bf fixation to consensus}. It was conjectured by
Liggett \cite{L} that fixation to consensus holds for all $\theta > \frac{1}{2}$.
Fontes, Schonmann and Sidoravicius \cite{FSS} proved fixation to consensus
for all $d\geq 2$ and $\theta_{\ast}$ strictly less but very close to 1. On $\mathbb{T}_3$, however, Howard \cite{Howard} showed that for some $\theta>1/2$, the system does not fixate to $+1$ consensus.

\subsection{$\sigma$ on slabs}

In \cite{DKNS1} and \cite{DKNS2}, Damron, Kogan, Newman and Sidoravicius studied
coarsening started from a configuration sampled from $\mu_\theta$ on
two-dimensional slabs of finite width $k$
with free boundary conditions, which we denote as $\text{Slab}_k$. These are
graphs with vertex set $\mathbb{Z}^2 \times \{0, 1, \ldots, k-1\}$ ($k \geq 2$)
and edge set $\mathcal{E}_k =\{\{x, y\}: \|x-y\|_1=1\}$.
Their work was motivated by the question of whether there are vertices
that fixate for $d\geq 3$ (and for which values of $d$). It
has been suggested by computational results of Spirin, Krapivsky and
Redner \cite{SKR} that some vertices do indeed
fixate.

The work in  \cite{DKNS1} and \cite{DKNS2} on slabs highlights the
possible diferences in long time behavior between $\mathbb{Z}^2$
and $\mathbb{Z}^3$. 
The authors
showed that if $k=2$ the system fixates with both free and periodic boundary
conditions; if $k=3$ with periodic boundary conditions the system also fixates;
for all $k\geq 4$ with periodic boundary conditions some vertices fixate for
large times and some do not, and the same result holds for all $k\geq 3$ with
free boundary conditions. 
We call vertices which change spin sign forever flippers.
One question which remains open, is whether the set of flippers
percolates (contains an infinite component).



On $\text{Slab}_3$, each $v\in \mathbb{Z}^2 \times\{0\}$ or
$v \in \mathbb{Z}^2 \times\{2\}$ has five
neighbors, so by a variant of the arguments of Nanda, Newman,
Stein \cite{NNS}, $v$ fixates almost surely. Therefore on this graph,
flippers can only exist in $\mathbb{Z}^2 \times \{1\}$. On the other hand,
if the initial configuration
on $\text{Slab}_3$ is chosen according to a symmetric Bernoulli product measure, then by
the Ergodic Theorem there are at $t=0$ infinitely many pillar-like same-spin
formations (say, $2\times 2 \times 3$ blocks) that are stable under the
dynamics. These pillars are analogous to frozen vertices on $\mathbb{Z}^2$
in our new process $\sigma'$. A
general version of this new process
on $\mathbb{Z}^d$ is presented in the following section.

\subsection{The process $\sigma'$}

In this subsection we define a new stochastic process on $\mathbb{Z}^d$,
which we denote by~$\sigma'(t)$, in which some vertices are frozen for all time and
the others are not. There are two basic versions of $\sigma'$, which we will call
{\bf disordered} and {\bf engineered}, according to whether the
frozen vertices are chosen randomly or deterministically.
The initial configuration of the disordered $\sigma'$
will be assigned as follows. Fix $\rho^+, \rho^- \geq 0$ with $\rho^+ + \rho^- \leq 1$
and pick three types of vertices (frozen plus, frozen minus and unfrozen) by i.i.d.
choices with respective probabilities $\rho^+, \rho^-$ and $1-(\rho^+ + \rho^-)$.
Once the frozen vertices have been chosen and assigned a spin value, the non-frozen
vertices will be assigned spin values arbitrarily. (In other words, the theorems we prove will be valid for all choices of such spin values.)
We will denote by $\mathbb{P}=\mathbb{P}_{\rho^+, \rho^-} \times \mathbb{P}_{\text{dyn}}$ the
overall probability measure where $\mathbb{P}_{\rho^+, \rho^-}$ is the distribution for
the assignment of frozen plus and minus vertices and $\mathbb{P}_{\text{dyn}}$ is the
distribution of the following dynamics for $\sigma'(t)$.
The continuous time dynamics is defined similarly to that of the $\sigma(t)$ process.
Vertices are assigned independent, rate 1 Poisson clock processes and
tie-breaking fair coins, and flip sign to agree with a majority of their neighbors.
Frozen vertices, however, never flip regardless of the configuration
of their neighbors. In the engineered $\sigma'$ the frozen vertices are chosen
deterministically while the non-frozen vertices are assigned values at time zero
by i.i.d. choices with respective probabilities $\theta$ and $1-\theta$ for
$+1$ and $-1.$

As usual, we are interested in the long term behavior of this model depending
on the dimension $d$, the densities of frozen vertices, $\rho^+, \rho^-$ (or in the
engineered version, the choice of frozen vertices), and
the initial configuration of vertices, which we denote by
$\sigma'(0)$. $\sigma'(0)$ may be regarded as an element of $\{-1,+1\}^{\mathbb{Z}^d}$ even though the frozen vertex spins are instantaneously replaced (at time $t=0+$) by their frozen values. Note that the $\sigma'$ processes for all possible choices of $\sigma'(0)$ are coupled on a single probability space. When $\rho^+>0 $ and $\rho^->0$, almost surely there exist
flippers. To see this, consider the following configuration for the case $d=2$,
which has probability $(1-\rho^+-\rho^-)(\rho^+)^2(\rho^-)^2$: the vertex labeled $v$ in
Figure~\ref{fig:flippervertex} below is not frozen, but has two frozen
neighbors of spin $+1$ and two frozen neighbors of spin $-1$, and thus flips
infinitely often. Similar flippers, as well as more complicated clusters of flippers,
occur for any $d$.

\begin{figure}[h]
\centering
\includegraphics{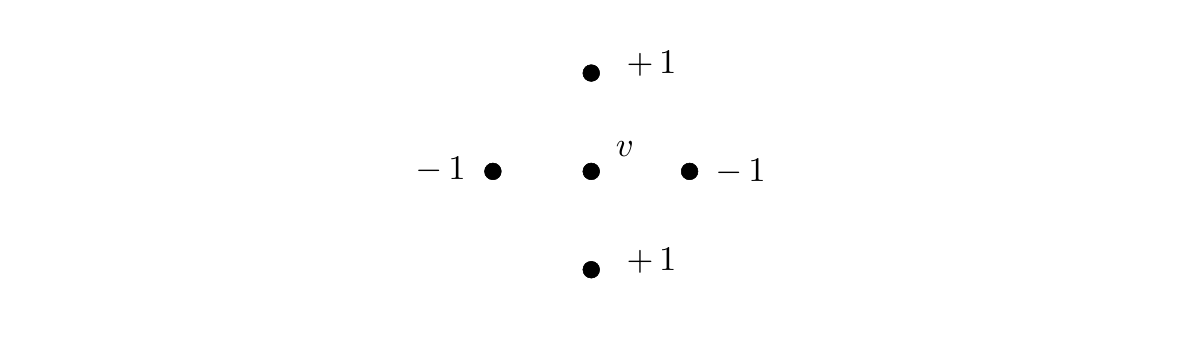}
\caption{A vertex that flips infinitely often.}
\label{fig:flippervertex}
\end{figure}

We conclude that $\sigma_v (t)$ need not have a limit as $t \rightarrow \infty$. So we are interested in whether the flippers percolate. Of course, one may study percolation of any of the three types of vertices (fixed plus, fixed minus or flippers) or of the union of two of the
three. Theorem \ref{thm_frozen_+-} below answers a question in this direction.

\section{Main results}\label{sec:theorems}

Our first two theorems concern the disordered version of $\sigma'$ and the last concerns engineered versions.
The first theorem is a fixation to consensus result
for the case of positive initial density of frozen $+1$'s and zero initial
density of frozen $-1$'s. The second theorem is a more general result in which
both $\rho^+, \rho^- >0$, but we require $\rho^-$ to be much smaller than~$\rho^+$.
For this more general case we obtain that the set of flippers together with
(eventually) fixed vertices of spin $-1$ does not percolate.

\begin{theorem}\label{thm_frozen_+d}
 Consider the disordered stochastic process $\sigma'$ on $\mathbb{Z}^d$
 for any $d$ and any $\rho^+ >0$, with $ \rho^- = 0$. 
 Then $\mathbb{P}($the system fixates to~$+1$ consensus for any $\sigma'(0))=1$.
\end{theorem}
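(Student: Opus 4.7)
The plan is to combine a monotonicity reduction, a finite-volume absorbing Markov-chain argument, and a coarse-grained renormalization that exploits the positive density $\rho^+>0$ to supply plus boundary conditions throughout $\mathbb{Z}^d$. The first move is monotonicity: majority dynamics with tie-breaking fair coins is coordinate-wise monotone under a common realization of the Poisson clocks and coins, so $\sigma'(0) \leq \tau'(0)$ pointwise implies $\sigma'(t) \leq \tau'(t)$ for every $t \geq 0$, and it is enough to prove fixation to $+1$ consensus starting from the worst case $\sigma'_x(0)=-1$ at every non-frozen $x$. Under this reduction the all-plus configuration is the unique absorbing state consistent with $\rho^-=0$, so it suffices to rule out persistent minus islands.

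The core local step I would prove is a finite-volume absorbing lemma: fix a finite box $\Lambda$ in which the external boundary $\partial\Lambda$ is held at $+1$ and the internal frozen-plus vertices are respected. The induced dynamics on $\{-1,+1\}^\Lambda$ is a finite-state continuous-time Markov chain whose only deterministically absorbing state is $+1^\Lambda$; indeed, taking the lexicographically smallest minus vertex in any non-all-plus configuration, its $d$ strictly-smaller-coordinate neighbors are all $+1$, so it is either plus-majority (flips to $+1$ with probability one) or tied (flips to $+1$ with probability $1/2$). Iterating at most $|\Lambda|$ such corner-peeling flips produces a positive-probability trajectory into $+1^\Lambda$, and finiteness of the state space yields almost sure absorption.

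To pass from $\Lambda$ to $\mathbb{Z}^d$ I would coarse-grain at a scale $L=L(\rho^+)$: partition $\mathbb{Z}^d$ into disjoint $L$-blocks and call a block \emph{good} if its frozen-plus configuration is dense enough, in a sense made precise by the finite-volume lemma, to force the block's non-frozen interior to fixate at $+1$ with probability at least $1-\delta(L)$, where $\delta(L)\to 0$ as $L\to\infty$. By the law of large numbers together with the i.i.d.\ structure of the frozen environment across disjoint blocks, good blocks form a supercritical Bernoulli site-percolation process on the renormalized lattice for $L$ large enough, so almost surely every vertex of $\mathbb{Z}^d$ belongs to an infinite connected cluster of good blocks. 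An outward induction along this cluster then allows each good block to receive the plus boundary condition it needs from already-fixated neighboring good blocks, and one concludes that every vertex of $\mathbb{Z}^d$ is eventually~$+1$.

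The hard part is precisely this last step: turning the finite-volume lemma, which postulates plus boundary conditions, into a genuine infinite-volume statement where no such boundary is given a priori. A naive induction across good blocks is circular, and finite islands of bad blocks threaten to support minus regions long enough to spoil fixation. Here the hypothesis $\rho^-=0$ is essential, since no region can support a truly persistent minus configuration: the corner-peeling mechanism generalizes to every scale, and finite bad pockets are eventually engulfed by plus advancing from the surrounding good blocks. Making this rigorous amounts to a Peierls-type estimate on the renormalized lattice combined with the corner-peeling argument applied to the coarse-grained plus region, which is where the bulk of the technical work will lie.
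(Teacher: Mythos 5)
Your proposal identifies the right difficulty but does not resolve it, and you say so yourself: the induction that is supposed to hand plus boundary conditions from already-fixated good blocks to the next one is circular, and you defer ``the bulk of the technical work'' to an unspecified Peierls estimate. That gap is genuine and it is exactly where the paper's argument diverges from yours. Your finite-volume absorbing lemma requires the external boundary of $\Lambda$ to be $+1$, which is never available a priori, and there is no obvious way to bootstrap it from renormalized good blocks because a good block in your sense still needs outside help both to \emph{become} all plus and to \emph{stay} all plus against a possibly hostile exterior.

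The paper sidesteps the boundary-condition problem entirely by constructing boxes that are self-sufficient in both respects. First, it compares the frozen plus set inside $B_L$ to $(u\to s)$ bootstrap percolation with threshold $d$ and uses Schonmann's theorem (Proposition~\ref{schonmann}) to get, for any $\rho^+>0$, that with probability tending to $1$ the box is \emph{internally spanned} -- meaning the coarsening dynamics alone, using only frozen pluses already inside $B_L$ and no help from outside, can drive $\sigma'|_{B_L}$ to all $+1$ with positive probability in unit time (Lemma~\ref{lemmaboostrapweak}), hence almost surely eventually (Lemma~\ref{lemmabootstrap}). Second, and this is the step your proposal is missing, the paper additionally demands that all $2^d$ corners of $B_L$ be frozen plus (the box is \emph{captured}). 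Once such a box is all plus, it remains all plus forever regardless of the configuration outside: corners are frozen, and every other vertex of $B_L$ has at least $d+1$ plus neighbors within $B_L$, so it can never flip. Lemma~\ref{lemmacaptured} then shows via Borel--Cantelli and a law-of-large-numbers argument that almost surely some $B_l$, $l\le L$, is captured. No renormalization, no percolation of good blocks, and no outward induction are needed; the argument is purely local around each vertex. To repair your proof you would essentially have to reinvent the spanning-plus-frozen-corners notion, at which point your coarse-grained Peierls machinery becomes unnecessary.
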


\begin{theorem}\label{thm_frozen_+-}
 Consider the disordered stochastic process $\sigma'$ on $\mathbb{Z}^d$
 for any $d$ and any $\rho^+ >0$, with $\rho^- >0 $ sufficiently
 small (depending on $\rho^+$ and $d$). For an initial configuration $\sigma'(0)$, denote by $\mathcal{C}(\sigma'(0))$ the collection of (eventually) fixed $-1$ vertices and flippers. Then $\mathcal{C}$ does not percolate:
 \[
 \mathbb{P}(\mathcal{C}(\sigma'(0)) \text{ contains
 an infinite component for some }\sigma'(0))=0.
 \]
\end{theorem}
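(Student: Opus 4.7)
The plan is a multi-scale block argument that reduces the statement about $\mathcal{C}$ to subcritical Bernoulli site percolation on a block lattice. Fix a large scale $L$, to be chosen depending on $\rho^+$ and $d$, and partition $\mathbb{Z}^d$ into disjoint cubes $Q_L(z) = z + [0, L)^d$ with $z \in L\mathbb{Z}^d$; write $\widetilde Q_L(z) := z + [-L, 2L)^d$ for its $3^d$-enlargement. Declare $Q_L(z)$ \emph{good} if (a) no frozen minus vertex lies in $\widetilde Q_L(z)$, and (b) the frozen-plus pattern in $\widetilde Q_L(z)$ satisfies a local, deterministic condition (to be extracted from the proof of Theorem~\ref{thm_frozen_+d}) that produces an insulating shell of $+1$'s around $Q_L(z)$. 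Call $Q_L(z)$ \emph{bad} otherwise.

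The argument has three pieces. First, a finite-volume decoupling lemma: if $Q_L(z)$ is good, then every vertex of $Q_L(z)$ fixates to $+1$ under the $\sigma'$ dynamics, uniformly in the initial non-frozen spins of $\sigma'(0)$ and independently of the environment and dynamics outside $\widetilde Q_L(z)$. The intuition is that a sufficiently thick shell of frozen $+1$'s in the outer annulus acts as a $+1$ barrier which decouples the interior of $\widetilde Q_L(z)$ from the outside. Second, a probability estimate: for $\rho^+$ fixed and $L$ large enough, the probability of condition (b) tends to $1$; then, choosing $\rho^-$ small enough that $(1-\rho^-)^{3^d L^d}$ is also close to $1$, the per-block bad probability drops below $p_c^{\mathrm{site}}(\mathbb{Z}^d)$. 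Because the goodness of $Q_L(z)$ depends only on the environment in $\widetilde Q_L(z)$, the bad block process has finite-range dependence and, by a standard Liggett--Schonmann--Stacey-type domination, is stochastically dominated by a subcritical Bernoulli site process on the block lattice, so bad blocks form only finite clusters almost surely. Third, by the decoupling lemma every vertex of $\mathcal{C}(\sigma'(0))$ must lie inside a bad block; combined with the previous step, $\mathcal{C}$ has no infinite component almost surely, uniformly in the choice of $\sigma'(0)$.

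The main obstacle will be the decoupling lemma. Theorem~\ref{thm_frozen_+d} is an infinite-volume fixation result and its proof may exploit global features; what is needed here is a local, quantitative companion. The task is to identify a deterministic configuration of frozen pluses inside $\widetilde Q_L(z)$, occurring with probability close to $1$ when $\rho^+ > 0$ and $L$ is large, that is rich enough to generate a stable $+1$ barrier around $Q_L(z)$ impervious to any behavior of the outside environment. Constructing such a barrier, and verifying that it suffices to drive $Q_L(z)$ to $+1$ even when the interior starts with arbitrarily chosen unfrozen spins, is where the real technical work lies.
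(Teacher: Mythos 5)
Your renormalization scaffolding is sound and is a genuinely different route from the paper's, arguably a cleaner one: by reading goodness of $Q_L(z)$ off the enlarged box $\widetilde{Q}_L(z)$, you ensure that \emph{every} vertex of a good block fixates to $+1$, at the price of a $1$-dependent block field, which you correctly handle via Liggett--Schonmann--Stacey domination. The paper instead tiles with disjoint boxes and declares a box good using only information internal to that box, keeping the bad-box field i.i.d.\ and avoiding any domination step, but then a good box can still contain non-fixating vertices near its corners; this forces the paper into a sequence of geometric lemmas (Lemmas~\ref{GdBxPth}, \ref{ADJ}, \ref{AdjCorPth}) tracking how paths of non-fixating vertices thread through corner regions of good boxes adjacent to bad ones. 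Your scheme avoids that corner-region geometry entirely, and your third step becomes trivial. That is a reasonable trade.

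The genuine gap is in your condition (b) and the decoupling lemma built on it, and it is not a routine extraction from the proof of Theorem~\ref{thm_frozen_+d}. A literal ``insulating shell'' of frozen-plus vertices surrounding $Q_L(z)$ has probability decaying exponentially in $L$, not tending to $1$, so it cannot supply a subcritical bad-block density. And the condition that the proof of Theorem~\ref{thm_frozen_+d} actually uses --- a spanning subset of frozen pluses (so bootstrap percolation drives $B_L$ to all $+1$) together with all $2^d$ corners frozen plus (so the box then stays $+1$) --- has probability $(\rho^+)^{2^d}$: bounded away from $0$, which is all that theorem needs, but nowhere near $1$. The missing idea is the paper's notion of an \emph{$M$-captured} box: a spanning frozen-plus subset, plus frozen pluses within distance $M$ of each corner in each coordinate direction. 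This has probability $\to 1$ as $M,L\to\infty$ (Lemma~\ref{lemmaMcaptured}), and --- this is Lemma~\ref{lemmaB[M]} --- once the enlarged box has been momentarily driven to all $+1$, the $M$-capture property confines any erosion of $+1$'s to the $M$-cubes at the corners, so the $M$-trimming of the box stays $+1$ forever. Since the trimming of $\widetilde{Q}_L(z)$ contains $Q_L(z)$ when $M<L$, this is exactly the decoupling lemma your argument needs. Replace (b) with ``$\widetilde{Q}_L(z)$ is $M$-captured'' and prove that trimming lemma; with it, the rest of your proposal closes.
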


\begin{remark}\label{remarkmodify}
Our main results, with essentially the same proofs, remain valid when the process
$\sigma'$ is modified in various ways. For example, the rules for breaking ties
can be modified as long as there is strictly positive probability to
update to $+1$. Also the process can evolve
in discrete time with synchronous updating. Another modification is to replace
the two spin
values, $\{-1,+1\}$, by $q$ values, say $\{1,2,\dots,q\}$, as long as
updates respect a majority
agreement of neighbors on one value.
\end{remark}

The next theorem concerns engineered versions of $\sigma'$ in which the frozen
vertices are all $+1$ and chosen deterministically while the other spin values at time zero are
i.i.d. with probability $\theta$ to be $+1$. Since in the disordered $\sigma'$ there are
infinitely many frozen vertices, it's natural to consider choosing infinitely many frozen
$+1$ vertices forming a lower dimensional subset of $\mathbb{Z}^d$, such as
$\mathbb{Z}^{d-k} \times \emptyset_k$ with $k, d-k \geq 1$, where $\emptyset_k$ denotes
the origin in $\mathbb{Z}^k$.

Although we have no results to report for any of these situations, the next theorem
concerns a slab approximation to the $k=1$ case where all the vertices in a codimension
one hyperplane are frozen to $+1$. Note that the frozen hyperplane separates $\mathbb{Z}^d$
into two graphs isomorphic to $\mathbb{Z}^{d-1} \times \{0, 1, 2, \ldots \}$ with
$\mathbb{Z}^{d-1}\times \{0\}$ frozen to $+1$, that evolve independently of each other.

\begin{theorem}\label{thm_slab}
 Consider the engineered stochastic process $\sigma'$ on $\mathbb{Z}^{d-1}\times \{0, 1, 2, \ldots, K \}$,
 for $d\geq~2$ and $K\geq 1$, with all spins on $\mathbb{Z}^{d-1} \times \{0\}$ frozen to $+1$
 and $\sigma'(0)$ for other vertices chosen from $\mu_\theta$. Then for any $\theta>0$, with probability $1$ the system fixates to $+1$ consensus.

\end{theorem}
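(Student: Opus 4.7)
The plan is to reduce Theorem~\ref{thm_slab} to Theorem~\ref{thm_frozen_+d} applied in dimension $d-1$, by identifying stable all-$+1$ substructures of the slab that can serve as i.i.d.\ frozen $+1$ sites on a coarse-grained copy of $\mathbb{Z}^{d-1}$.

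For $v \in \mathbb{Z}^{d-1}$ define the pillar
\[
B_v := (v + \{0,1\}^{d-1}) \times \{0, 1, \ldots, K\}.
\]
The first step is to verify that if $B_v$ is entirely $+1$ at time zero, it remains so for all $t \geq 0$, regardless of the outside configuration. A direct neighbor count shows that every vertex $(w, z) \in B_v$ has exactly $d-1$ horizontal neighbors inside $B_v$ (one per coordinate direction, chosen according to which corner of $v + \{0,1\}^{d-1}$ the point $w$ occupies), plus both vertical in-block neighbors when $0 < z < K$ or just the one below when $z = K$. Thus $d+1$ of $2d$ (interior) or $d$ of $2d-1$ (top) neighbors lie in $B_v$, a strict majority of $+1$s, so no vertex of $B_v$ ever flips. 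Since $\sigma'(0)$ is i.i.d.\ $\mu_\theta$ off $\mathbb{Z}^{d-1}\times\{0\}$, the event ``$B_v$ starts entirely $+1$'' has probability $p := \theta^{K \cdot 2^{d-1}} > 0$.

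Partition $\mathbb{Z}^{d-1}$ into the disjoint translates $2u + \{0,1\}^{d-1}$ for $u \in \mathbb{Z}^{d-1}$, identify the partition with a coarse copy of $\mathbb{Z}^{d-1}$, and declare $u$ \emph{super-frozen $+1$} exactly when $B_{2u}$ starts all $+1$; this produces a Bernoulli field of density $p > 0$ on the coarse lattice. Consider an auxiliary disordered $\sigma'$ process on this coarse $\mathbb{Z}^{d-1}$ with these super-frozen sites and $\rho^- = 0$; Theorem~\ref{thm_frozen_+d} then gives a.s.\ fixation to $+1$ consensus. What remains is to couple the auxiliary process with the slab so that, whenever the auxiliary spin at $u$ has fixed at $+1$, the slab pillar $B_{2u}$ is eventually all $+1$. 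Combined with the auxiliary $+1$-consensus, this yields pillar-by-pillar $+1$-ness in the slab, hence $+1$ consensus throughout the slab.

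The main obstacle is the block-comparison lemma underlying this coupling: whenever enough coarse neighbors of $B_{2u}$ become eventually all $+1$ in the slab, so too does $B_{2u}$. Conditional on the adjacent super-pillars being fixed at $+1$, the slab dynamics restricted to the interior of $B_{2u}$ is a finite-state continuous-time Markov chain on $\{-1,+1\}^{K \cdot 2^{d-1}}$, with horizontal boundary and the $z=0$ floor frozen at $+1$. Step~1 makes ``all $+1$'' an absorbing state; the chain reaches it a.s.\ because at $z=1$ each vertex of $B_{2u}$ already sees $d$ guaranteed $+1$ neighbors (the $d-1$ horizontal from adjacent super-pillars plus the frozen $+1$ below), which is a strict $+1$ majority when $K=1$ and at worst a tie when $K\geq 2$, whose coin-toss resolution supplies positive probability of flipping $-1 \to +1$; an induction up in $z$ using the $+1$s established at lower heights then propagates $+1$ to the top of the pillar. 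Matching this propagation to the auxiliary $\sigma'$ dynamics at the right rule—rather than a more restrictive ``all neighbors'' rule—is the technical crux of the argument.
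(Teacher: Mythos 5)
Your step~1 is correct and matches the paper exactly: the pillars $B_v=(v+\{0,1\}^{d-1})\times\{0,\dots,K\}$ are absorbing for the all-$+1$ state (each vertex has $d+1$ of $2d$, or $d$ of $2d-1$, in-block neighbors), and the probability that a pillar starts all $+1$ is $\theta^{K\cdot 2^{d-1}}>0$, which for $d-1=2$ is the paper's $\theta^{4K}$. The divergence is in the second half, and the gap you flag at the end is real and is not a technicality. The paper does \emph{not} reduce to Theorem~\ref{thm_frozen_+d}; instead it renormalizes to a bootstrap percolation process on $\mathbb{Z}^{d-1}$ and invokes the version of Proposition~\ref{schonmann} for Schonmann's \emph{modified basic model} (\cite[Prop.~3.2]{S}), whose rule is ``at least one $s$-neighbor in each of the $d-1$ coordinate directions,'' not the standard ``at least $d-1$ $s$-neighbors in total.'' This distinction is essential: if, say $d-1=2$, the two all-$+1$ neighbor pillars of $\mathcal P_{i,j}$ are $\mathcal P_{i-1,j}$ and $\mathcal P_{i+1,j}$ (the same coordinate direction), then every site of $\mathcal P_{i,j}$ at height $z=1$ sees only $2$ guaranteed $+1$ neighbors out of $6$ (or out of $5$ when $K=1$), which is a strict minority, so $\mathcal P_{i,j}$ need never become all $+1$. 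Thus the slab dynamics does not locally simulate the threshold-$(d-1)$ bootstrap rule that underlies Lemma~\ref{lemmaboostrapweak} and Theorem~\ref{thm_frozen_+d}; it only simulates the modified basic rule, where one $+1$ pillar in each coordinate direction plus the frozen floor yields a tie (or majority) at each site of the first layer and lets the $+1$ state propagate upward as in Lemma~\ref{lemmaboostrapweak}.

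Beyond the wrong bootstrap rule, the reduction to Theorem~\ref{thm_frozen_+d} as a black box is structurally problematic: that theorem is about the non-monotone coarsening dynamics on $\mathbb{Z}^{d-1}$, whose spins flip back and forth before fixating, while the set of all-$+1$ pillars in the slab is monotone increasing in time; there is no obvious coupling that transfers ``auxiliary spin at $u$ eventually fixes $+1$'' into ``pillar $B_{2u}$ eventually becomes all $+1$.'' The monotone object that does transfer cleanly is bootstrap percolation, and the correct rule is the modified basic one. So what is missing from your proposal is precisely the identification of that rule (and the corresponding Schonmann result), together with the Lemma~\ref{lemmabootstrap}-style upgrade from positive probability to probability one — which is what the paper's proof supplies after setting up the same pillar decomposition you use.
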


\section{Proofs}\label{sec:proofs}

In this section we give the proofs of the theorems stated in
Section~\ref{sec:theorems}. Let $B_L=[-L, L]^d$ be the cubic box of side length $2L+1$
centered at the origin and $B_L(x)=x + [-L, L]^d$ be the translated box centered at
$x\in \mathbb{Z}^d$.

\subsection{Bootstrap percolation}
Following Fontes, Schonmann and Sidoravicius \cite[Section~2]{FSS}, we describe the
bootstrap percolation process that assigns
configurations $\{u, s\}^{\mathbb{Z}^d}$ to a subset of $\mathbb{Z}^d$; here $u$
represents an \emph{unstable} spin and $s$ represents a \emph{stable} spin at a vertex.

\begin{definition}
The $d$-dimensional $(u\rightarrow s)$ bootstrap percolation process with threshold
$\gamma$, defined in a finite or infinite volume $\Lambda \subseteq \mathbb{Z}^d$,
starting from the initial configuration $\eta(0) \in \{u, s\}^\Lambda$ is a cellular
automaton which evolves in discrete time $t=0, 1, 2, \ldots$ such that at each time
unit $t\geq 1$ the current configuration is updated according to the following rules.
For each $x\in \Lambda$,
\begin{enumerate}
 \item If $\eta_{x}(t-1)=s$, then $\eta_x(t)=s$.
 \item If $\eta_{x}(t-1)=u$, and at time $t-1$ the vertex $x$ has at least $\gamma$
neighbors in $\Lambda$ in state $s$, then $\eta_x(t)=s$; otherwise, the spin at vertex $x$
remains unchanged; that is, $\eta_x(t)=u$.
\end{enumerate}
\end{definition}

We will consider this process with threshold $\gamma = d$, as its evolution is close to
our coarsening dynamics, and assume the initial configuration to be chosen from an
independent Bernoulli product measure $P(\eta_x(0)=s)=p$, for $p$ small, 
on $\Lambda = \mathbb{Z}^d$. Note that by monotonicity of the dynamics, each 
$\eta_x(t)$ has a limit as $t\to\infty$.

\begin{definition}
A configuration $\eta\in \{u, s\}^\Lambda$ \textbf{internally spans} a region
$B_L(x) \subset \Lambda$, if the bootstrap percolation restricted to $B_L(x)$,
started from 
$\eta |_{B_L}$, ends up with all vertices of $B_L$ in state $s$. We
will denote 
by $\eta_L$ the subset $\{x \in B_L: \, \eta_x=s\}$ for such an $\eta$ and
call it \textbf{spanning}.
\end{definition}

The following proposition, an immediate consequence of results of Schonmann \cite{S},
provides a key ingredient for our proofs.

\begin{proposition}\label{schonmann}
\emph{[Schonmann]}
If $p>0$, then
\begin{equation*}
 \lim_{L \rightarrow \infty} P(B_L \text{ is internally spanned}) = 1.
\end{equation*}
\end{proposition}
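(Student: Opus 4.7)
The plan is to derive the statement directly from Schonmann's theorem \cite{S}, which asserts that for the threshold-$d$ bootstrap percolation process on $\mathbb{Z}^d$, the critical density is zero: for any $p>0$, the infinite-volume dynamics started from Bernoulli$(p)$ stable sites almost surely activates every vertex of $\mathbb{Z}^d$.

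My first step would be to reformulate internal spanning in a convenient monotone form. The event $\{B_L \text{ is internally spanned}\}$ is increasing in the set of initially stable sites in $B_L$, and its complement is equivalent to the existence of a nonempty ``blocking set'' $A \subseteq B_L$ such that every vertex of $A$ has fewer than $d$ initially stable neighbors in $B_L\setminus A$. This reduces the proposition to showing that with probability tending to $1$ there is no such blocking set.

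The second step is a standard finite-range-to-finite-volume reduction. Since a.s. every site is activated in infinite volume, and each activation has finite causal history, for any $\epsilon>0$ there exists $\ell=\ell(\epsilon,p,d)$ such that with probability at least $1-\epsilon$, the configuration in a sub-box $B_\ell$ is already internally spanning (this is the positive-density version of Schonmann's critical-droplet estimate). I would then implement the usual renormalization: partition $B_L$ into disjoint sub-boxes of side $\ell$, call a sub-box \emph{good} if it is internally spanned, and observe that good sub-boxes occur independently with high probability. A ``growth from a spanned cluster'' argument, which at threshold $d$ requires one helper stable neighbor per external face, can be carried out because the density of helpers remains $p>0$; hence the activated region expands through the renormalized lattice and covers all of $B_L$ with probability tending to $1$.

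The main obstacle is the propagation step: at threshold exactly $d$, a fully stable face does not grow by itself, so one really does need the positive-density supply of helper sites, and one must verify that boundary sub-boxes of $B_L$ do not obstruct the spanning. This is precisely what Schonmann handles in \cite{S} via his rescaling construction; once invoked, the conclusion $\lim_L P(B_L \text{ internally spanned}) = 1$ follows immediately, which is why the proposition is stated as a corollary rather than proved from scratch.
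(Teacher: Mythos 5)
The paper offers no proof of this proposition: it is explicitly labelled ``an immediate consequence of results of Schonmann'' and deferred entirely to \cite{S}, which is exactly where your final paragraph lands. So your bottom line matches the paper's treatment.

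That said, the bridge you sketch between Schonmann's a.s.-occupation theorem and the internal-spanning statement has a genuine gap. You assert that, because a.s. every site is activated and each activation has finite causal history, for large $\ell$ the box $B_\ell$ is internally spanned with probability at least $1-\epsilon$. Finite causal history only gives that the activation of the \emph{origin} is witnessed by finitely many sites, hence that the bootstrap \emph{restricted to} $B_\ell$ eventually activates the origin with high probability; it does not follow that the restricted dynamics fills \emph{all} of $B_\ell$ --- the far corners of $B_\ell$ may well remain unstable even when the origin is activated. In Schonmann's development the logical dependence actually runs the opposite way: the dimension-recursion / rescaling argument establishes $\lim_L P(B_L \text{ internally spanned})=1$ first, and a.s. occupation of $\mathbb{Z}^d$ is then deduced as a corollary (a spanned seed box occurs somewhere by ergodicity and grows, using the positive density of helper sites, roughly as in your third step). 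Your heuristic for the growth step at threshold $d$ --- one helper stable site per new face slab --- is indeed the right mechanism, but internal spanning is the primitive that \cite{S} proves, not something one extracts from a.s. occupation via a finite-causality observation. Since the paper simply cites \cite{S}, the cleanest fix to your write-up is to drop the causal-history inference and quote Schonmann's internal-spanning result directly.
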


\begin{remark}
In \cite{S} a variation of bootstrap percolation with threshold $\gamma=d$, called the
\\ {\bf {modified basic model}},
is considered. Here, rule 2 is replaced by one which requires at least one neighbor in each of the $d$ coordinate directions from $x$
to have value  $s$ in order that $x$ change from $u$ to $s$. For the modified basic model, the analogue of Proposition~\ref{schonmann}
remains valid -- see \cite[Proposition~3.2]{S}. This will be used in the proof of Theorem~\ref{thm_slab}.
\end{remark}

\subsection{Preliminary lemmas}
We will make a comparison between the bootstrap percolation process on $\mathbb{Z}^d$ and
our process $\sigma'$ by mapping frozen plus spins to stable spins $s$, and
all other spins to unstable spins $u$. In fact, we will compare only on finite regions which do not contain frozen minus vertices, so it is unimportant that these vertices are mapped to $u$.
We say that a region $B_L (x)$ contains a \emph{spanning subset} of frozen plus
vertices if the configuration obtained
by the above mapping internally spans $B_L(x)$.

\begin{definition}\label{Mcaptured}
$B_L(x)$ is \textbf{entrapped} if it contains a spanning subset of frozen
plus vertices. It is \textbf{captured} if it is entrapped and all $2^d$ corners are frozen
plus. It is \textbf{$M$-captured} ($M\in \{0, 1, \ldots, L\}$) if it is entrapped and
for each of the $2^d$ corners $C^i, (i=1, \ldots, 2^d)$, and each coordinate direction
$j=1, \ldots, d$ there is a frozen plus vertex within $B_L(x)$ of the form
$C^{i, j}=C^i+m e^j$ with $|m| \leq M$
(where $e^1, \ldots, e^d$ are the standard basis vectors of $\mathbb{R}^d$) -- see
Figure~\ref{fig:Mcaptured}. Note that captured is the same as 0-captured.
\end{definition}

\begin{figure}[h]
\centering
\includegraphics{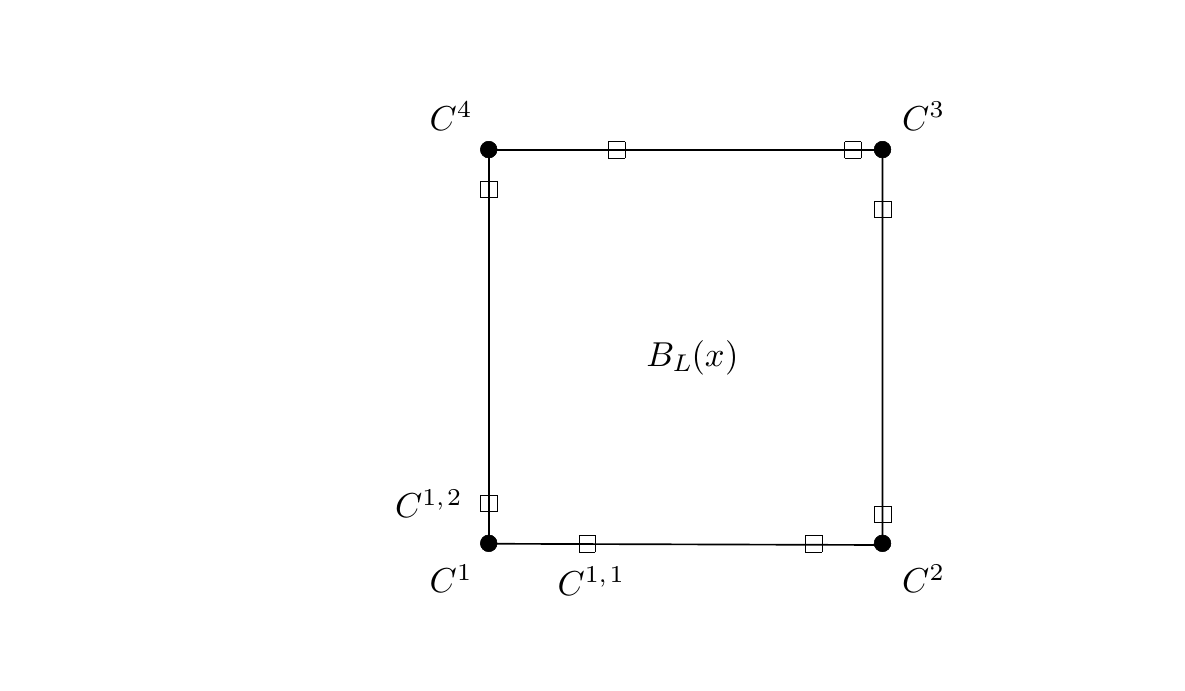}
\hspace{.75in}
\caption{$B_L(x)$ is M-captured.}
\label{fig:Mcaptured}
\end{figure}

The notion of $M$-captured will be used (in Lemma
\ref{lemmaB[M]}) to guarantee that with high probability most of the vertices in the box
$B_L$ (at least for $1 \ll M \ll L$) will fixate to +1. Note that, although in the above
definition $M = L$ is allowed, in Lemmas \ref{lemmaprobMgood} and \ref{lemmaB[M]} we
require $M <L$. Lemma \ref{lemmaMcaptured}, though, will allow us to 
choose $1 \ll M \ll L$ in the final proof, although we do not make use of this.
Lemmas \ref{lemmaboostrapweak}, \ref{lemmabootstrap} and \ref{lemmacaptured}, presented
next, will be used in the proof of Theorem \ref{thm_frozen_+d}.

\begin{lemma}\label{lemmaboostrapweak}
Given $L$ and a spanning subset $\eta_L$ of $B_L$,
consider the $\sigma'$ process in $\mathbb{Z}^d$ with initial spins in $\eta_L$ taken as
frozen plus and all others in $\mathbb{Z}^d \setminus \eta_L$ taken as minus but not frozen.
Then
\begin{equation*}
 \mathbb{P} (\text{for some } t\in[0, 1], \sigma'(t)|_{B_L} \equiv +1) >0.
\end{equation*}
\end{lemma}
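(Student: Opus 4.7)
The plan is to realize the bootstrap percolation sequence that fills $B_L$ with $s$'s as an actual sequence of $\sigma'$ updates, by conditioning on a positive-probability event involving only the Poisson clocks and tie-breaking coins in a bounded neighborhood of $B_L$.

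First, enumerate the non-frozen vertices of $B_L$ as $v_1,\dots,v_N$ in an order consistent with the bootstrap process on $B_L$ starting from $\eta|_{B_L}$: if $v_i$ is converted from $u$ to $s$ at discrete bootstrap step $\tau_i$, arrange the labeling so that $\tau_i\le\tau_j$ whenever $i<j$. By the bootstrap rule with threshold $\gamma=d$, at the moment just before $v_i$ would be updated, at least $d$ of its lattice neighbors already lie in $\eta_L\cup\{v_1,\dots,v_{i-1}\}$.

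Next, let $\partial B_L$ denote the outer boundary of $B_L$ in $\mathbb{Z}^d$, and consider the finite set $B_L\cup\partial B_L$. Define the event $A$ on the dynamics by: (a) for each $i$, the clock at $v_i$ rings exactly once during $[0,1]$, and the rings are time-ordered $0<t_1<\dots<t_N<1$; (b) no other clock rings during $[0,1]$ at any vertex of $(B_L\cup\partial B_L)\setminus\eta_L$ (clock rings at frozen vertices are irrelevant since they never flip); (c) every tie encountered at an update of some $v_i$ is broken in favor of $+1$. Since $B_L\cup\partial B_L$ is finite, $A$ is the intersection of finitely many independent events of strictly positive probability, so $\mathbb{P}(A)>0$.

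Finally, a short induction on $i$ shows that on $A$, just after the ring at time $t_i$, the set of $+1$-spins inside $B_L$ is exactly $\eta_L\cup\{v_1,\dots,v_i\}$ while all of $\partial B_L$ is still at $-1$. Indeed, by the bootstrap property $v_{i+1}$ has at least $d$ of its $2d$ neighbors at $+1$; the remaining at most $d$ neighbors all lie in $(B_L\setminus(\eta_L\cup\{v_1,\dots,v_i\}))\cup\partial B_L$ and are therefore at $-1$, so the update at $v_{i+1}$ sees either a strict $+1$-majority (deterministic flip) or a tie (flip to $+1$ by (c)). Taking $t=t_N\in[0,1]$ yields $\sigma'(t)|_{B_L}\equiv+1$. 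The only delicate point I anticipate is the mismatch between the bootstrap rule (counting only in-box neighbors) and the coarsening rule (counting all $2d$ neighbors); clause (b), which keeps $\partial B_L$ frozen at $-1$ throughout $[0,1]$, is exactly what closes that gap and ensures the bootstrap count agrees with the count of $+1$-neighbors seen by the true dynamics.
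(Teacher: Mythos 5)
Your proposal is correct and follows essentially the same route as the paper's own (much briefer) proof: the paper simply asserts that one can arrange the clock rings and tie-breaking coins in $(0,1)$ to mimic the discrete bootstrap dynamics, and your argument fills in exactly that bookkeeping, including the crucial point that freezing $\partial B_L$ at $-1$ via clause (b) reconciles the in-box bootstrap neighbor count with the full $2d$-neighbor count used by the coarsening rule.
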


\begin{proof}
Since $\eta_L$ is a spanning
subset of $B_L$, the corresponding bootstrap percolation process occupies all vertices of $B_L$ in a finite number of steps. Since the
threshold $\gamma=d$, we can, with a small but positive probability, arrange the clock
rings for $t\in (0, 1)$ and tie-breaking coin tosses of
the coarsening dynamics to mimic the dynamics of bootstrap percolation (in a much longer
discrete time interval). Thus $\sigma'(t)|_{B_L} \equiv +1$ for some $t\in [0, 1]$
with positive probability.
\end{proof}

The next lemma strengthens the last one by showing that,
if we allow the process to run until a large time, then with probability
close to one all the vertices of $B_L$ will flip to $+1$ before that time. Let $Span_L$ be the event that the frozen plus vertices span $B_L$ and there are no frozen minus vertices in $B_L$.

\begin{lemma}\label{lemmabootstrap}
Given $L <\infty$,
\begin{equation*}
 \lim_{T \to \infty} \mathbb{P} \left(\exists t\in[0, T] \text{ such that }  \sigma'(t)|_{B_L} \equiv +1 \text{ for all }\sigma'(0) \mid Span_L \right) = 1.
\end{equation*}
\end{lemma}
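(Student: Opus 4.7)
The plan is to bootstrap Lemma~\ref{lemmaboostrapweak} in two ways: strengthen the single-interval event so that it succeeds \emph{uniformly in} $\sigma'(0)$, and then iterate over many independent unit time intervals to push the probability to $1$. The whole argument takes place conditionally on $Span_L$, which depends only on the (random) frozen configuration and is independent of the clock rings and coin tosses.

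First, fix a realization of $Span_L$ and, for its spanning subset, a bootstrap enumeration $x_1,\ldots,x_N$ of the non-frozen vertices in $B_L$ (so that at the $k$-th step, $x_k$ has at least $d$ neighbors among the frozen plus vertices together with $\{x_1,\ldots,x_{k-1}\}$). Consider the event $A_0$ that during $[0,1]$ the vertices of $B_L$ experience clock rings only at times $t_1<t_2<\cdots<t_N$ with $x_k$ ringing at $t_k$, and that the associated tie-breaking coin tosses all favor $+1$. This event has positive probability and depends only on clocks and coins inside $B_L$ during $[0,1]$. Under $A_0$, an induction on $k$ shows that just before time $t_k$ the vertices $x_1,\ldots,x_{k-1}$ all carry spin $+1$, since they were updated at $t_1,\ldots,t_{k-1}$ and have not been rung since; hence $x_k$ sees at least $d$ of its $2d$ neighbors in state $+1$, and the majority-or-tie rule forces $\sigma'_{x_k}(t_k)=+1$ independently of the values of the remaining neighbors and of $\sigma'(0)$. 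Therefore $\sigma'(t_N)|_{B_L}\equiv +1$ simultaneously for every initial configuration. Because $B_L$ is finite, there are only finitely many spanning patterns inside it, so the conditional probability of such an event admits a uniform lower bound $p(L)>0$.

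Second, I would repeat the construction in disjoint unit intervals $[n,n+1]$ for $n=0,1,\ldots,\lfloor T\rfloor-1$. The corresponding events $A_n$ depend on disjoint pieces of the Poisson clock and coin processes and are therefore mutually independent conditional on the frozen pattern, with $\mathbb{P}(A_n\mid Span_L)\ge p(L)$. Hence
\[
\mathbb{P}\Bigl(\bigcup_{n=0}^{\lfloor T\rfloor-1} A_n \,\Big|\, Span_L\Bigr)\ \ge\ 1-(1-p(L))^{\lfloor T\rfloor}\ \longrightarrow\ 1
\]
as $T\to\infty$. On each $A_n$ there is some $t\in[n,n+1]\subseteq[0,T]$ at which $\sigma'(t)|_{B_L}\equiv +1$ for every $\sigma'(0)$, which yields the claim. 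The main delicate point is the uniformity in $\sigma'(0)$: it rests on the fact that the bootstrap threshold $\gamma=d$ matches exactly the smallest $+1$-neighbor count (namely $d$ out of $2d$) which guarantees a coin-forced flip to $+1$ in the $\sigma'$ dynamics, together with the need to suppress spurious clock rings inside $B_L$ during the orchestrated sequence. Once this uniformity is in place, the remainder is a routine Borel--Cantelli argument across independent time windows.
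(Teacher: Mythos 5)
Your proof is correct and follows essentially the same route as the paper: construct a single-unit-interval event that forces $\sigma'(t)|_{B_L}\equiv +1$ via a bootstrap-ordered sequence of clock rings and favorable coin tosses, then iterate over disjoint unit intervals to push the failure probability to zero. The only (minor) difference is that you establish the uniformity in $\sigma'(0)$ directly inside the construction --- observing that each $x_k$ sees at least $d$ of its $2d$ neighbors forced to $+1$ regardless of the rest --- whereas the paper applies Lemma~\ref{lemmaboostrapweak} (which starts from the all-minus configuration) together with attractiveness (monotonicity) of the dynamics; both justifications are valid.
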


\begin{proof}
Pick a maximal spanning subset $\eta_L$ (in some deterministic ordering of subsets) of frozen plus vertices of $B_L$. By
Lemma \ref{lemmaboostrapweak} and attractiveness (monotonicity of the dynamics), there is an $\epsilon' >0$ such that,
for any $m$ and $\sigma'(m)$ (consistent with the frozen vertices in $B_L$),

\begin{equation*}
 \mathbb{P} (\text{for some } t\in[m, m+1],
\sigma'(t)|_{B_L}  \equiv +1 | \sigma'(m)) \geq \epsilon'.
\end{equation*}

\noindent
Given $\epsilon>0$, let $T_L \geq \frac{\log \epsilon}{\log (1-\epsilon')}$ be an integer and apply repeatedly  the last inequality to
time intervals of the form $[m, m+1]$ for integers $m \in [0,T_L)$:

\[
 \mathbb{P} (\text{at some time } t\in[0, T_L], \sigma'|_{B_L} \equiv +1)  \geq
 1- (1-\epsilon')^{T_L}   > 1- \epsilon.
\]
\end{proof}

\begin{lemma}\label{lemmacaptured}
 If $\rho^+ >0$, then
\begin{equation*}
 \lim_{L \rightarrow \infty}\mathbb{P}(B_l \text{ is captured for some } l \leq L)=1.
\end{equation*}
\end{lemma}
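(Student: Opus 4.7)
The plan is to combine Proposition~\ref{schonmann} with a multi-scale argument that exploits the independence of the corner events across distinct scales. Write $E_l = \{B_l\text{ is entrapped}\}$ and $D_l = \{\text{all }2^d\text{ corners of }B_l\text{ are frozen plus}\}$, so that $\{B_l\text{ is captured}\} = E_l \cap D_l$. Set $q := (\rho^+)^{2^d} > 0$. Because the corner vertices of $B_l$ and $B_{l'}$ are disjoint whenever $l \neq l'$, the collection $\{D_l\}$ is mutually independent and each $\mathbb{P}(D_l) = q$. Since the frozen plus vertices form an i.i.d.\ Bernoulli field of density $\rho^+ > 0$, Proposition~\ref{schonmann} applied with $p = \rho^+$ yields, for every $\epsilon > 0$, an $L_0 = L_0(\epsilon)$ such that $\mathbb{P}(E_l) \geq 1 - \epsilon$ for all $l \geq L_0$.

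Given $\delta > 0$, the plan is to first choose $N$ large enough that $(1-q)^N < \delta/2$, then set $\epsilon := \delta/(2N)$ and obtain the corresponding $L_0$. Consider the $N$ scales $l_i := L_0 + i - 1$, $i = 1, \ldots, N$. The elementary set inclusion
\[
\bigcap_{i=1}^N (D_{l_i} \cap E_{l_i})^c \;\subseteq\; \Bigl(\bigcap_{i=1}^N D_{l_i}^c\Bigr) \cup \Bigl(\bigcup_{i=1}^N E_{l_i}^c\Bigr),
\]
combined with the mutual independence of the $D_{l_i}$'s and the union bound on the $E_{l_i}^c$'s, yields
\[
\mathbb{P}\bigl(\text{no }B_{l_i}\text{ is captured}\bigr) \;\leq\; (1-q)^N + N\epsilon \;<\; \delta.
\]
Taking $L \geq L_0 + N - 1$ then gives $\mathbb{P}(\exists\, l \leq L : B_l \text{ is captured}) > 1 - \delta$, and letting $\delta \to 0$ completes the proof.

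The main (mild) obstacle is that $E_l$ and $D_l$ are correlated at each fixed scale, since the corner vertices themselves lie inside $B_l$ and so their statuses affect entrapment. The inclusion above sidesteps this by decoupling the two types of events: Schonmann's theorem is invoked only through a union bound over $N$ rare failure events, while the exponentially small bound $(1-q)^N$ relies solely on the independence of the $D_{l_i}$'s. A worthwhile sanity check is that entrapment is monotone in the set of frozen plus vertices, so conditioning on $D_l$ can only \emph{help} the spanning---which is consistent with, though not required by, the bound above.
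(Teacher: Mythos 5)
Your proof is correct and relies on the same two ingredients as the paper: Schonmann's proposition for entrapment, and the mutual independence of the corner events $D_l$ across distinct scales $l$ (because the corner vertex sets $\{-l,l\}^d$ are pairwise disjoint). The difference is purely in how you finish: the paper picks scales $L_i$ with $\mathbb{P}(E_{L_i}^c) < 1/i^2$, applies Borel--Cantelli to conclude that almost surely all but finitely many $B_{L_i}$ are entrapped, and then uses the second Borel--Cantelli lemma (the paper calls it the Law of Large Numbers) on the independent events $D_{L_i}$ to conclude that infinitely many $B_{L_i}$ are simultaneously captured. You instead run a finite quantitative argument over $N$ consecutive scales, using the set inclusion $\bigcap_i (D_{l_i}\cap E_{l_i})^c \subseteq (\bigcap_i D_{l_i}^c)\cup(\bigcup_i E_{l_i}^c)$ to decouple the two failure modes, bounding one by independence and the other by a union bound. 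Your version avoids Borel--Cantelli entirely and gives an explicit $\delta$-dependence on $L$; the paper's version is shorter and yields the stronger almost-sure ``infinitely often'' conclusion as a byproduct. Both are valid, and your decoupling inclusion is a clean way to handle the (real but harmless) correlation between $E_l$ and $D_l$ at a fixed scale.
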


\begin{proof}
By Proposition~\ref{schonmann} with $p=\rho^+$, pick a sequence of increasing box sizes $L_i \in \mathbb{N}$ such that
$L_1 < L_2 < \ldots$ and
\begin{equation*}
 \mathbb{P} (B_{L_i} \text{ is not entrapped}) < \frac{1}{i^2}.
\end{equation*}

\noindent
By the Borel-Cantelli Lemma, almost surely, all but finitely many boxes $B_{L_i}$ are entrapped.
Now the probability that each box $B_{L_i}$ has all corners frozen plus equals

\begin{equation*}
 \mathbb{P} (B_{L_i} \text{ has all corners frozen plus}) = (\rho^+)^{2^d} >0.
\end{equation*}

\noindent
By the Law of Large Numbers, almost surely infinitely many boxes $B_{L_i}$ have this property.
Combining these statements, almost surely infinitely many boxes $B_{L_i}$ are captured, which implies the conclusion of the lemma.
\end{proof}

The remaining lemmas will be used in the proof of
Theorem \ref{thm_frozen_+-}.

\begin{lemma}\label{lemmaMcaptured}
 If $\rho^+ > 0$, then
\begin{equation*}
 \lim_{M, L \rightarrow \infty} \mathbb{P} (B_L \text{ is $M$-captured}) =1,
\end{equation*}

\noindent
where $M, L$ tend to infinity with no restriction other than $M \leq L$.
\end{lemma}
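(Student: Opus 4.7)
The plan is to write the event ``$B_L$ is $M$-captured'' as the intersection of two events: (i) $B_L$ is entrapped, and (ii) for each of the $2^d \cdot d$ (corner, axis) pairs $(C^i, j)$ there exists a frozen plus vertex on the segment $\{C^i + m e^j : |m| \leq M\} \cap B_L$. I would then bound the probability that each event fails and let $L$ and $M$ tend to infinity (with $M \leq L$).

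For (i), Proposition \ref{schonmann} applied with $p = \rho^+ > 0$ gives $\mathbb{P}(B_L \text{ is not entrapped}) \to 0$ as $L \to \infty$. For (ii), fix a corner $C^i$ and an axis $j$. Because $C^i$ has $j$-th coordinate equal to $\pm L$, exactly one sign of $m \in \{-M,\ldots,M\}$ keeps $C^i + m e^j$ inside $B_L$ (and this is where the hypothesis $M \leq L$ enters), so the candidate segment consists of $M+1$ vertices whose frozen statuses are i.i.d.\ Bernoulli with parameter $\rho^+$. Hence the probability that none of them is frozen plus equals $(1-\rho^+)^{M+1}$, and a union bound over the $2^d \cdot d$ (corner, axis) pairs shows that event (ii) fails with probability at most $2^d \, d \,(1-\rho^+)^{M+1}$.

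Combining via one last union bound,
\[
\mathbb{P}(B_L \text{ is not } M\text{-captured}) \;\leq\; \mathbb{P}(B_L \text{ is not entrapped}) \;+\; 2^d \, d \,(1-\rho^+)^{M+1},
\]
and both terms vanish as $M, L \to \infty$ with no relation required between $M$ and $L$ other than $M \leq L$. There is no real obstacle: the lemma is a mild strengthening of Lemma \ref{lemmacaptured}, and the only ingredient beyond Schonmann's theorem is the elementary observation that a length-$(M+1)$ i.i.d.\ Bernoulli segment contains a success with probability tending to one. The sole point that deserves attention is keeping track of which direction along each axis actually lies inside $B_L$ at a given corner, so that the $M+1$ candidate vertices are correctly identified and fall within the box.
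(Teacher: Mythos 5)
Your proof is correct and takes essentially the same approach as the paper: decompose the $M$-captured event as the intersection of ``entrapped'' (handled by Schonmann's Proposition~\ref{schonmann}) with the corner-proximity event, and show the latter has probability tending to~1 as $M\to\infty$ uniformly in $L \geq M$. The only cosmetic difference is that you use a union bound over the $2^d d$ (corner, axis) pairs where the paper uses a product lower bound; both give the same conclusion with the same ingredients.
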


\begin{proof}
By Proposition \ref{schonmann}, as in the proof of Lemma \ref{lemmacaptured},
\begin{equation*}
 \lim_{L \rightarrow \infty} \mathbb{P} (B_L \text{ is entrapped}) =1.
\end{equation*}

\noindent
Now for any fixed $L$ and $M\leq L$, let $A_{L, M}$ denote the event that there exist frozen
plus spins within distance $M$ from each of the $2^d$ corners of $B_L$ in every one
of the $d$ coordinate directions,
as in Definition \ref{Mcaptured}. Thus the event that $B_L$ is $M$-captured is the
intersection of $A_{L, M}$ with the event that $B_L$ is entrapped.
The probability of the event that any specific collection of $M$ vertices contains no
frozen plus spins is $(1-\rho^+)^M$, so

\begin{equation*}
 \mathbb{P} (A_{L, M}) \geq ([1-(1-\rho^+)^M]^d)^{2^d},
\end{equation*}

\noindent
and this tends to 1 as $M \rightarrow \infty$ (for fixed $d$) uniformly in $L \geq M$.
\end{proof}

\begin{definition}\label{Mtrimming}
Let $B$ be a box of the form $B_L(x)$. We say that $B$ is \textbf{$M$-good}
if $B$ is $M$-captured and contains no frozen minus vertices. We define
$B[M]$, the \textbf{$M$-trimming} of $B$ as
\begin{equation*}
 B\setminus  \left( \bigcup_{i=1}^{2^d} \bar{C}^i (M) \right),
\end{equation*}
\noindent
where each $\bar{C}^i(M)$ is a cube within $B$ containing exactly $M^d$ vertices
including the i\textsuperscript{th} corner of $B$ - see Figure \ref{fig:Mtrimming}
for the case $d=2$.
\end{definition}

\begin{figure}[h]
\centering
\includegraphics{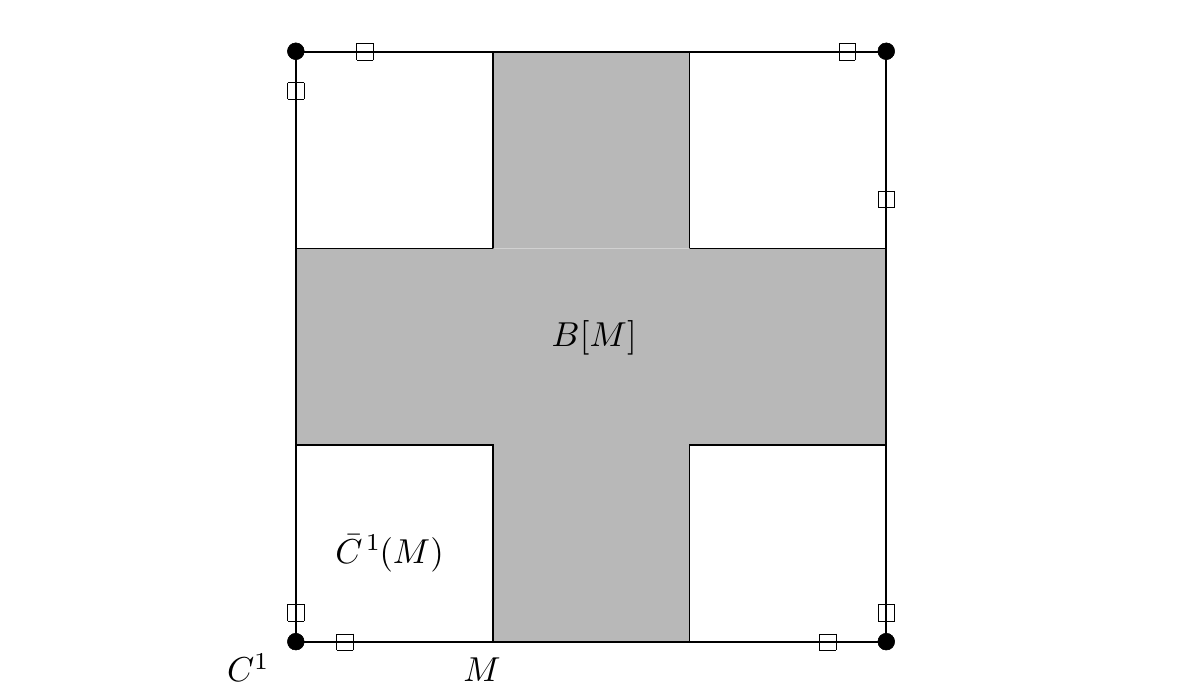}
\caption{$B[M]$ (grey), the $M$-trimming of an $M$-good box $B$.}
\label{fig:Mtrimming}
\end{figure}

\begin{lemma}\label{lemmaprobMgood}
Given $\rho^+ >0$ and $\epsilon >0$, there exist $L<\infty$ and $M<L$ such
that for all sufficiently small $\rho^-$ (depending on $d, L, M, \epsilon$ and
$\rho^+$),
\begin{equation*}
 \mathbb{P} (B_L \text{ is M-good}) > 1-\epsilon.
\end{equation*}
\end{lemma}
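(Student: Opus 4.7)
The plan is to split the event into its two constituent parts, control them separately, and combine by a union bound on the complements. Recall that the disordered initial configuration marks each vertex independently as frozen plus (with probability $\rho^+$), frozen minus (with probability $\rho^-$), or unfrozen, and that the event ``$B_L$ is $M$-captured'' depends only on the positions of the frozen plus vertices in $B_L$.

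First I would choose $L$ and $M$. The marginal distribution of the frozen plus vertices is Bernoulli product with parameter $\rho^+$, independently of $\rho^-$. Therefore Lemma~\ref{lemmaMcaptured} (applied with parameter $\rho^+$) lets me pick integers $M < L$, depending only on $d$, $\rho^+$ and $\epsilon$, such that
\[
\mathbb{P}(B_L \text{ is } M\text{-captured}) > 1 - \epsilon/2.
\]
Note that this threshold can be chosen before $\rho^-$ is specified.

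Second, with $L$ now fixed, I would control the probability of seeing a frozen minus vertex in $B_L$. Since the spins are i.i.d.\ and $|B_L| = (2L+1)^d$,
\[
\mathbb{P}(B_L \text{ contains no frozen minus vertex}) = (1-\rho^-)^{(2L+1)^d},
\]
which tends to $1$ as $\rho^- \downarrow 0$. Hence for all sufficiently small $\rho^-$ (depending on $d$, $L$, and $\epsilon$), this probability exceeds $1 - \epsilon/2$.

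Finally I would combine the two estimates by a union bound on the complementary events, which gives
\[
\mathbb{P}(B_L \text{ is } M\text{-good}) \geq 1 - \epsilon/2 - \epsilon/2 = 1 - \epsilon,
\]
as required. There is no real obstacle here beyond getting the order of quantifiers right: $L$ and $M$ must be chosen based on $\rho^+$ \emph{first}, and then $\rho^-$ is taken small relative to that choice. Because the two events involve disjoint vertex-type labels, no independence or FKG argument is needed --- the plain union bound suffices.
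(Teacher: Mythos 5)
Your proof is correct and follows essentially the same approach as the paper: choose $L, M$ via Lemma~\ref{lemmaMcaptured} so that $M$-capture fails with probability at most $\epsilon/2$, then take $\rho^-$ small enough that a frozen minus vertex appears in $B_L$ with probability at most $\epsilon/2$, and combine by a union bound. The explicit formula $(1-\rho^-)^{(2L+1)^d}$ you include is a minor elaboration on the paper's ``we may also pick $\rho^-$ small enough,'' but the argument is the same.
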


\begin{proof}
 By Lemma \ref{lemmaMcaptured}, we may choose $L,M$ with $M < L$ so that

\begin{equation*}
 \mathbb{P} (B_L \text{ is } M\text{-captured}) > 1-\frac{\epsilon}{2}.
\end{equation*}
\noindent
We may also pick $\rho^-$ small enough so that the probability that $B_L$ contains
any frozen minus vertices is less than $\frac{\epsilon}{2}$. Thus
\[
 \mathbb{P} (B_L \text{ is not } M\text{-good}) < \epsilon/2 + \epsilon/2 = \epsilon.
\]
\end{proof}

\begin{lemma}\label{lemmaB[M]}
 Let $M<L$ and $E_{M,L}$ be the event that $B_L$ is $M$-good. Then
 \[
 \mathbb{P}(\text{for any }\sigma'(0), \text{ all vertices in }B_L[M] \text{ fixate to } +1 \mid E_{M,L}) = 1.
 \]
\end{lemma}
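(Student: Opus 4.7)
The plan has two steps: (i) use Lemma \ref{lemmabootstrap} to reach a finite time $t_0$ after which $B_L$ is entirely $+1$, and (ii) use the $M$-capture condition to lock $B_L[M]$ at $+1$ thereafter. For step (i), note that $E_{M,L}$ implies $B_L$ is entrapped and contains no frozen minus vertex, so $E_{M,L}\subseteq Span_L$. Conditional on $E_{M,L}$, Lemma \ref{lemmabootstrap} then yields almost surely a finite $t_0$ with $\sigma'(t_0)|_{B_L}\equiv +1$ simultaneously for every choice of $\sigma'(0)$.

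For step (ii), by attractiveness it suffices to work in the worst case in which every vertex outside $B_L$ is frozen at $-1$ from time $t_0$ onward: if $B_L[M]$ stays $+1$ in this regime, it does so in the original process. Define the \emph{reach set} $R\subseteq B_L$ as the smallest collection of non-frozen-plus vertices closed under the rule that $v\in R$ whenever at least $d$ of $v$'s neighbors lie in $R\cup(\mathbb{Z}^d\setminus B_L)$. A vertex of $B_L$ can take the value $-1$ at some positive-probability time, via strict-majority flips or tie flips with an adversarial coin, if and only if it lies in $R$; so the lemma reduces to proving $R\cap B_L[M]=\emptyset$.

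The key geometric claim is that, for each corner $C^i$ of $B_L$, the cascade from $C^i$ is contained in an axis-aligned ``cascade box'' at $C^i$ of side lengths $k_1^i,\ldots,k_d^i$, where $k_j^i\le M$ denotes the distance from $C^i$ to the nearest frozen-plus vertex in the inward direction $e^j$ guaranteed by $M$-capture. This cascade box lies inside $\bar{C}^i(M)$, and because the corner cubes are pairwise non-adjacent when $M<L$, the cascades from distinct corners do not interact, giving $R\subseteq\bigcup_i \bar{C}^i(M)$. Granting this containment, fix $v\in B_L[M]$: $v$ has at most one neighbor in any single $\bar{C}^i(M)$ (a single-coordinate shift into the cube) and at most $d-1$ neighbors outside $B_L$, since being in $B_L[M]$ forces at least one coordinate of $v$ to lie strictly between $-L$ and $L$. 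If these two bounds sum to the tie threshold $d$, then $v$ has one coordinate equal to $L-M$ (or $-L+M$) and $d-1$ coordinates coinciding with those of some corner $C^i$; but then the corresponding $\bar{C}^i(M)$-neighbor lies in the cascade box only when $k_{j_0}^i=M$, in which case $M$-capture places the frozen-plus vertex exactly at $v$. Hence $v$ is either frozen plus or has strictly fewer than $d$ bad neighbors, and in either case $v\notin R$.

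The main technical obstacle is the inductive proof of the cascade-box containment. One proceeds by layers of increasing distance from $C^i$: at each layer one checks that any non-frozen-plus vertex $w$ just outside the proposed box in a direction $j_0$ has at most one neighbor inside the box and at most $d-1$ external neighbors, for a total of at most $d$ bad neighbors, with equality forcing $w$'s remaining coordinates to match those of $C^i$ -- so that $w$ coincides with the bounding frozen-plus vertex on the $j_0$-axis and is excluded from $R$ by definition. Diagonal tie-propagation inside $\bar{C}^i(M)$ must be controlled jointly across all coordinates rather than axis by axis, which is where the argument requires careful bookkeeping.
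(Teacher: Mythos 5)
Your plan is essentially the paper's: run Lemma~\ref{lemmabootstrap} to reach $t_0$ with $\sigma'(t_0)|_{B_L}\equiv +1$, then argue that the minus ``cascade'' emanating from each corner stays inside $\bar C^i(M)$ because of the frozen-plus vertices $C^{i,j}$ from the $M$-capture condition, and hence $B_L[M]$ is stable. Two remarks. First, your treatment of the boundary of $B_L[M]$ is actually more careful than the paper's: the paper asserts that every vertex of $B_L[M]$ has at least $d+1$ neighbors in $B_L[M]$, but this fails at the ``cross corners'' such as $(L-M,L,\dots,L)$, which have exactly $d$ neighbors in $B_L[M]$, one neighbor in a cube $\bar C^i(M)$, and $d-1$ neighbors outside $B_L$. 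Your refinement -- observing that this critical vertex can have $d$ bad neighbors only when the cube-neighbor lies in the cascade box, i.e.\ only when $k^i_{j_0}=M$, in which case the $M$-capture frozen-plus vertex is $v$ itself -- is exactly the patch needed, and is the content of the paper's ``rectangular parallelepiped'' remark. Second, the cascade-box containment $R\subseteq\bigcup_i \prod_j\{0,\dots,k^i_j-1\}$ that you flag as the remaining bookkeeping is precisely what the paper dismisses with ``a moment's thought''; your sketch (induct over the reach-set iteration, note that a vertex one step outside the proposed box in coordinate $j_0$ with all other coordinates $0$ is the frozen-plus vertex $k^i_{j_0}e^{j_0}$, and otherwise has at most $d-1$ bad neighbors) is correct and completes it, so there is no gap in the approach, only unexpanded bookkeeping at the same level of detail as the paper itself.
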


\begin{proof}
If $B_L$ is $M$-good, then by Lemma \ref{lemmabootstrap}, almost surely for some time
$t_0$, $\sigma'(t_0)|_{B_L} \equiv +1$. In this case, if $M>0$,
$\sigma'(t)|_{B_L}$ need not stay identically $+1$ for $t>t_0$ because
vertices near the corners can change from $+1$ to $-1$. But a moment's
thought shows that the only vertices near a corner $C^i$ that can change
to $-1$ are those in a subset of the cube $\bar{C}^i(M)$ - see Definition
\ref{Mtrimming}. This is because the frozen plus vertices $C^{i, j}$ (from Definition
\ref{Mcaptured})  protect against the flipping of plus vertices
beyond a rectangular parallelipiped contained in $\bar{C}^i(M)$. Note that because $M<L$,
every vertex in $B_L[M]$ has at least $d+1$ neighbors in $B_L[M]$. For example, for $d=2$ (see
Figure \ref{fig:Mtrimming}), $B_L[M]$ is the union of two rectangles each of width at least
three (hence at least two) so that every vertex in a rectangle has at least three
neighbors in the rectangle. Thus $B_L [M]$ will have
$\sigma'(t)|_{B_L[M]} \equiv +1$ for all $t \geq t_0$. Of course the same
arguments apply to any translated box $B_L(x)=B_L +x $ and to
$B_{L} [M] (x)= B_{L} [M] + x$.
\end{proof}

\subsection{Proofs of main results}

The first of the two theorems follows easily from Lemmas \ref{lemmabootstrap} and
\ref{lemmacaptured}.

\begin{proof}[Proof of Theorem \ref{thm_frozen_+d}]
If the box $B_L$ has all $2^d$ corners frozen plus, and if at some time $t_0$, \\
$\sigma'(t_0) |_{B_L} \equiv +1$,
then $\sigma'(t) |_{B_L} \equiv +1$ for all $t \geq t_0$. This is because after time $t_0$
every vertex in $B_L$ (other than the corners whose spin value is frozen) will have at
least $d+1$ plus neighbors, so it won't flip sign.
If $B_L$ is also captured, then by Lemma \ref{lemmabootstrap}, with
probability one, $\sigma'(t) |_{B_L} \equiv +1$ will occur for some $t$. Now by
Lemma \ref{lemmacaptured}, with probability one, $B_L$ will be captured for some $L$ and
so $\sigma'(0)$ will fixate to $+1$. The same argument can be translated to $x$ and
$B_L(x)$ for any $x\in \mathbb{Z}^d$.
\end{proof}

\begin{proof}[Proof of Theorem \ref{thm_frozen_+-}]
We first introduce an auxiliary graph $G$, with vertex set
$\mathbb{Z}^d$, but where $y_1, y_2$ are neighbors (with edge $\{y_1, y_2 \}$)
if $\| y_1 - y_2 \|_\infty =1$, so every $y$ has $3^d-1$ neighbors. Pick $M<L$ to be determined later and tile $\mathbb{Z}^d$ with boxes $C_L (y)= B_L ((2L+1)y), ~y\in \mathbb{Z}^d$. For disjoint such 
boxes the events of
being $M$-good are independent, so the collection of $M$-good boxes and $M$-bad (that is, not $M$-good) 
boxes defines an independent percolation process on a ``renormalized'' copy 
of $G$, called $\hat G$, by referring to a vertex $y \in \hat G$ as {\bf good} 
if $C_L(y)$ is $M$-good in $G$ and {\bf bad} otherwise. Note that when 
two vertices in $\hat G$ are neighbors, this corresponds (for example, 
for $d=3$), to $C_L(y_1)$ and $C_L(y_2)$ sharing either a
face or an edge or just a corner. (The reason for using this notion of connectedness is that
a standard (using only nearest neighbor edges) cluster of fixed minus and flipping
vertices can extend beyond the standard cluster of $M$-bad (that is, not $M$-good) boxes
into the $G$-cluster of $M$-bad boxes and beyond into the $G$-closure of that $G$-cluster.)



 




Let $\mathcal{C}$ denote the cluster of bad vertices containing the origin in
this independent site percolation model of sites in $\hat G$, and let
$\bar{\mathcal{C}}$ denote its closure (that is, $\mathcal{C}$ unioned with the set of good sites that are
$G$-neighbors of sites in $\mathcal{C}$). We will show that the $G$-cluster $\mathcal{C}^\ast$ containing the origin,
consisting of (eventually) fixed minus vertices together with flipping vertices of $\sigma'$, satisfies
\begin{equation}\label{Cast}
\mathbb{P}\left( \mathcal{C}^\ast \subseteq \bigcup_{y\in \bar{\mathcal{C}}} C_L (y) \text{ for every } \sigma'(0) \right)=1.
\end{equation}

Once Equation~(\ref{Cast}) has been verified, the proof is completed as follows.
By standard percolation
arguments, there is some $p^\ast >0$ (one can take, for instance,
$p^\ast = 1 / (3^d-1)$, since $3^d-1$ is the number of
neighbors of any vertex in $G$), such that, if

\begin{equation} \label{probbadsite}
\mathbb{P} ( y \text{ is bad}) = \mathbb{P}(B_L \text{ is not }M\text{-good})<p^\ast,
\end{equation}

\noindent
then there is almost surely no percolation of bad sites and
$\mathbb{E}(|\bar{\mathcal{C}}|) < \infty$.
To finish the proof we use Lemma \ref{lemmaprobMgood} to choose $\rho^-$ so small that
inequality (\ref{probbadsite}) is valid, and note that by inequality (\ref{Cast}),
\begin{equation*}
| \mathcal{C}^\ast| \leq |\bar{\mathcal{C}}| (2L+1)^d < \infty.
\end{equation*}
\end{proof}

It remains to prove~(\ref{Cast}), for which we review some old and and  
introduce some  new notation. 
We refer to vertices $x$ and $y$ in (the original) $\mathbb{Z}^d$ as $G$-neighbors
if $\|x-y\|_{\infty}=1$ (and we view them as vertices in the graph $G$).
We refer to two boxes  $C_L(y_1)$ and $C_L(y_2)$ as $G$-box-neighbors 
if $y_1$ and $y_2$ are $G$-neighbors in (the renormalized) $\mathbb{Z}^d$ (and we view these vertices in the graph $\hat G$).
$G$-clusters and $G$-box-clusters (of certain vertices and boxes) and their
boundaries will be used later; their definitions are analogous.
Note that there exists a $G$-path connecting a vertex in $C_L(y_1)$ to one in 
$C_L(y_2)$ and contained within $C_L(y_1) \cup C_L(y_2)$  
iff  $C_L(y_1)$ and $C_L(y_2)$  are  $G$-box-neighbors.  

A corner region $R(y)$ of a box $C_L(y)$ is a cube entirely within $C_L(y)$
containing exactly $M^d$ vertices including one of the $2^d$ corner vertices
of $C_L(y)$. (A vertex is  a corner vertex if it has only $d$ neighbors 
within $C_L(y)$). Any box $C_L(.)$ contains exactly $2^d$ distinct corner regions 
and their $G$-boundaries are disjoint when $M<L$. 

We call a box {\bf good} if it is $M$-good. If a box is not good, it is {\bf bad}. 
We call a vertex bad if it is either
in a bad box or 
in one of the corner regions of a good box; otherwise, it is called a good vertex. Note that by Lemma~\ref{lemmaB[M]}, all good vertices fixate to $+1$.

\begin{lemma} \label{GdBxPth}

Let $x_1 \in R_1(x)$ and $x_2 \in R_2(x)$ where $R_1(x)$ and $R_2(x)$ 
are two distinct corner regions of a good $C_L(x)$.
Let $\gamma$ be a $G$-path of bad vertices connecting $x_1$ and $x_2$;
then $\gamma$ contains a vertex $z$ such that $z \notin C_L(x)$.

%
%
%
%
\end{lemma}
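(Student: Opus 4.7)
The plan is to argue by contradiction: suppose the hypothesized $G$-path $\gamma = (v_0, v_1, \ldots, v_n)$ with $v_0 = x_1 \in R_1(x)$ and $v_n = x_2 \in R_2(x)$ is entirely contained in $C_L(x)$, and derive an impossibility using only the definitions of good/bad vertex and the structural facts about corner regions stated in the excerpt.

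First I would note that, since $C_L(x)$ is a good box, the definition of \emph{bad vertex} given just before the lemma forces any bad vertex of $C_L(x)$ to lie in one of the $2^d$ corner regions $R_1(x), \ldots, R_{2^d}(x)$. So $\gamma \subseteq \bigcup_{i=1}^{2^d} R_i(x)$. Because $M<L$, the corner regions are pairwise disjoint ($M$-cubes anchored at distinct corners that differ by $2L$ in at least one coordinate), so each $v_k$ belongs to a uniquely determined region $R_{i_k}(x)$.

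Next, the sequence $(i_0, \ldots, i_n)$ starts at the index of $R_1(x)$ and ends at the different index of $R_2(x)$, so by a simple pigeonhole there exists some $k$ with $i_k \neq i_{k+1}$. At that step $v_k$ and $v_{k+1}$ are $G$-adjacent yet lie in distinct corner regions of $C_L(x)$. This contradicts the fact quoted just before the lemma that the $G$-boundaries of the distinct corner regions of $C_L(x)$ are disjoint when $M<L$---equivalently, that no vertex of $R_i(x)$ is $G$-adjacent to any vertex of $R_j(x)$ for $i \neq j$ (the underlying estimate being that the closest coordinates of two distinct corner regions differ by at least $2L - 2M + 2$, which exceeds $1$ when $M<L$). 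The contradiction forces $\gamma$ to contain a vertex $z \notin C_L(x)$, as claimed.

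There is no substantial obstacle beyond correctly packaging the $M<L$ hypothesis; it is used once to ensure the index $i_k$ is well-defined (pairwise disjointness of corner regions) and once to rule out the forbidden $G$-adjacency between distinct corner regions (disjointness of their $G$-boundaries). Both invocations are immediate consequences of the paper's setup and require no extra work.
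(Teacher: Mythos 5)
Your proof is correct, but it follows a genuinely different route from the paper's. The paper argues via a discrete intermediate-value theorem on coordinates: after translating $C_L(x)$ to $\{1,\dots,2L+1\}^d$, it assumes $\gamma_1$ has all coordinates $\le M$, observes that $\gamma_2$ in a different corner region has some coordinate $b_i > M+3$ (using $M<L$), and since consecutive vertices on a $G$-path change each coordinate by at most $1$, some vertex $\gamma_3$ on the path has a coordinate exactly equal to $M+1$; such a vertex lies outside every corner region of $C_L(x)$ and is therefore good, contradicting that $\gamma$ consists of bad vertices. Your proof instead uses the fact that the corner regions of a good box are pairwise disjoint and pairwise non-$G$-adjacent (when $M<L$), so a $G$-connected set of bad vertices contained in $C_L(x)$ is trapped in a single corner region -- yet $x_1$ and $x_2$ are in distinct ones. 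Both proofs are sound and of comparable length; yours avoids the explicit coordinate bookkeeping and makes the role of the $M<L$ hypothesis more transparent, while the paper's argument produces an explicit good vertex on the path. One small caveat: your parenthetical claim that disjointness of the $G$-boundaries is ``equivalent'' to non-$G$-adjacency of the corner regions is not literally an equivalence under the usual (outer) boundary convention; but this is harmless, since you supply the direct $\ell_\infty$ distance estimate (minimum coordinate gap $2L-2M+2 \ge 4 > 1$), which is what the argument actually uses.
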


\begin{proof}
By way of contradiction, suppose such a $G$-path $ \gamma $ exists in $C_L(x)$.
After translating $C_L(x)$ to $\{1,2,\dots,2L+1\}^d$, we may assume that $\gamma$  starts at a vertex $\gamma _1 = (a_1, a_2,\ldots, a_d)$ 
such that for all $i$, $a_i \leq M$.  If $\gamma$ ends in a different 
corner region, 
denote the last vertex in $\gamma$  by $\gamma_2=(b_1, b_2, \ldots, b_d)$ and note that 
at least for one $i$, $b_i> M+3$. Without loss in generality, assume $b_1 >M+3$ and note that the 
$G$-neighbors  connecting $\gamma_1$ and $\gamma_2$ differ in any coordinate
by at most $1$. Thus there exists a bad vertex $\gamma_3=(c_1, c_2, ...c_d)$ 
in $\gamma$ with $c_1=M+1$. Recall, 
however, that in a good box only the corner regions contain bad 
vertices, so that any site $x=(x_1, x_2, \ldots, x_d)$ 
with $M+1 \leq x_i \leq M+3$ for at least one $i$, is good. 
\end{proof}

\begin{definition} \label{Adj}
For $x\neq y$, two corner regions, $R(x)$ and $R(y)$, of boxes $C_L(x)$ and $C_L(y)$  are called adjacent if there exists $v \in R(x)$ and $v' \in R(y)$ such that $v$ and $v'$ are $G$-neighbors.
\end{definition}

The proof of the next lemma is straightforward and is left to the reader.

\begin{lemma} \label{ADJ}
Let $x,y$ be distinct vertices in  $\mathbb{Z}^d$.
\begin{itemize}

\item{}If there is a $G$-path connecting $R(x)$ and $C_L(y)$ and remaining in $C_L(x) \cup C_L(y)$ then $C_L(y)$ 
has a corner region $R(y)$ adjacent to $R(x)$.
\item{} If $R(x)$ is adjacent to $R(y)$ and $R(y)$ is adjacent to $R(z)$ 
then $R(x)$ is adjacent to $R(z)$ (transitivity of the adjacency relation).
\end{itemize}
\end{lemma}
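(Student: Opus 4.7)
The plan is to introduce a label $\lambda(R) := x + c \in \mathbb{Z}^d$ for each corner region $R$ at corner $c \in \{0,1\}^d$ of a box $C_L(x)$, and to show that two corner regions $R, R'$ in distinct boxes are adjacent in the sense of Definition~\ref{Adj} if and only if $\lambda(R) = \lambda(R')$. Granting this characterization, both items of the lemma fall out essentially for free.

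To establish the characterization I would examine each coordinate separately. In coordinate $i$, the projections of the $c_i = 0$ and $c_i = 1$ corner regions of $C_L(x)$ are the intervals $[(2L+1)x_i - L,\, (2L+1)x_i - L + M - 1]$ and $[(2L+1)x_i + L - M + 1,\, (2L+1)x_i + L]$, respectively. A short case check on $(c_{x,i}, c_{y,i}) \in \{0,1\}^2$, using $M < L$ to rule out proximity between opposite ends of the same box, shows that the projections of $R(x)$ and $R(y)$ contain a pair of points within distance $1$ iff $c_{x,i} - c_{y,i} = y_i - x_i$. Running this over all $i$, together with the $\ell^\infty$ structure of $G$, yields the asserted equivalence: $R(x)$ and $R(y)$ are $G$-adjacent iff $x + c_x = y + c_y$.

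Given this, transitivity is immediate: the hypotheses give $\lambda(R(x)) = \lambda(R(y)) = \lambda(R(z))$, so $R(x)$ and $R(z)$ are adjacent whenever $x \neq z$ (the degenerate case $x = z$ collapses to $R(x) = R(z)$ by uniqueness of the corner region inside $C_L(x)$ carrying a given label). For the first item, I read the $G$-path as a bad-vertex path, as in the ambient proof of Theorem~\ref{thm_frozen_+-}, and take $C_L(x)$ to be good; this is the only setting in which corner regions $R(x)$ enter the argument, and without it a path could cross $C_L(x)$ freely and the conclusion would fail. Let $v_{j-1} \to v_j$ be the first step of the path that crosses from $C_L(x)$ into $C_L(y)$. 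The prefix $v_0, \ldots, v_{j-1}$ is a bad-vertex $G$-path remaining in $C_L(x)$ and starting at $v_0 \in R(x)$, so Lemma~\ref{GdBxPth} forces it to stay inside $R(x)$; in particular $v_{j-1} \in R(x)$. The single step $v_{j-1} \to v_j$ with $v_j \in C_L(y)$ then forces, by the coordinate analysis above, $y_i - x_i \in \{0,\, 2c_{x,i} - 1\}$ for every $i$, so $c_y := c_x - (y - x)$ lies in $\{0,1\}^d$. The corner region of $C_L(y)$ at corner $c_y$ has label $y + c_y = x + c_x$ and is therefore adjacent to $R(x)$ by the characterization.

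The only real work is the careful coordinate bookkeeping, particularly invoking $M < L$ at the correct step to separate opposite corner regions of the same box and to preclude spurious near-adjacencies; no genuine conceptual difficulty arises, matching the authors' assertion that the proof is straightforward.
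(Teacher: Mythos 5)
The paper declares this lemma ``straightforward and left to the reader,'' so there is no printed proof to compare against; I therefore assess your argument on its own terms, and it holds up.

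Your label characterization $\lambda(R)=x+c$, with adjacency of corner regions in distinct boxes equivalent to equality of labels, is a clean and correct way to organize the coordinate bookkeeping. The interval computation is right: for each $i$, the projections of $R(x)$ and $R(y)$ come within $\ell^\infty$-distance $1$ precisely when $x_i+c_{x,i}=y_i+c_{y,i}$ (the opposite-corner case within a single box gives a gap of $2L-2M+2\ge 4$ under $M<L$, and across boxes the tile-shift of $2L+1$ kills all other alignments). Since for $x\ne y$ some coordinate is strictly offset by $1$, the existence of $v\in R(x)$, $v'\in R(y)$ with $\|v-v'\|_\infty=1$ follows coordinate-by-coordinate, and the characterization delivers transitivity at once, including the degenerate $x=z$ case. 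Your first-item argument, taking the last vertex $v_{j-1}$ of the prefix inside $C_L(x)$, using the fact that bad vertices of a good box lie in corner regions together with Lemma~\ref{GdBxPth} to pin $v_{j-1}\in R(x)$, and then extracting $c_y=c_x-(y-x)\in\{0,1\}^d$ from the single crossing step, is sound.

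One point worth flagging as genuinely useful: you are right that the first item is false as literally stated for an arbitrary $G$-path and arbitrary $C_L(x)$ (e.g.\ a path from one corner region of $C_L(x)$ across the interior and out the far side into a non-aligned neighboring box). The hypotheses that the path consists of bad vertices and that $C_L(x)$ is good are implicit in the surrounding text (Lemmas~\ref{GdBxPth} and~\ref{AdjCorPth} and the subsequent proposition are all phrased for bad-vertex paths and good/bad boxes), and your reading is the one under which the lemma is both true and used. A minor stylistic remark: the assertion that Lemma~\ref{GdBxPth} ``forces the prefix to stay inside $R(x)$'' also uses the fact that bad vertices in a good box are exactly the corner-region vertices, which you invoke implicitly; it would be cleaner to state that explicitly, but the logic is correct as written.
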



 
\begin{lemma} \label{AdjCorPth}
Let $C_L(y)$ be a good box and $C_L(x)$ be a bad one. 
Assume that 
$C_L(y)$ is a G-box-neighbor of $C_L(x)$. 
Let $\gamma$ be a $G$-path of bad sites starting in $R(y)$, where
$R(y)$ is adjacent to a corner region of $C_L(x)$.
Then $\gamma$ can only exit $C_L(y)$ into either
\begin{enumerate}
\item a bad box that is a $G$-box-neighbor of $C_L(x)$, or
\item a corner region, adjacent to $R(y)$, of a good box that is a 
$G$-box-neighbor of $C_L(x)$.
\end{enumerate}
\end{lemma}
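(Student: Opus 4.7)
The plan is to follow $\gamma$ to its first exit from $C_L(y)$, use Lemma~\ref{GdBxPth} to pin down the corner region of $C_L(y)$ at which this exit happens, and then exploit the adjacency of $R(y)$ with a corner region of $C_L(x)$ to control the box the exit enters.

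First I would write $\gamma = v_0, v_1, v_2, \ldots$ with $v_0\in R(y)$ and let $n$ be the smallest index with $v_n\notin C_L(y)$. Then $v_{n-1}\in C_L(y)$ is bad, so, since $C_L(y)$ is good, it must lie in some corner region of $C_L(y)$. Applying Lemma~\ref{GdBxPth} to the sub-path $v_0,\dots,v_{n-1}$, which lies entirely inside $C_L(y)$, forces $v_{n-1}$ to lie in the same corner region as $v_0$, namely $R(y)$; otherwise this sub-path would be a $G$-path of bad sites inside a good box joining two distinct corner regions, contradicting Lemma~\ref{GdBxPth}.

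Next let $C_L(z)$ be the box containing $v_n$, and let $\epsilon(y)\in\{-1,+1\}^d$ record the corner of $C_L(y)$ at which $R(y)$ sits. Since $v_{n-1}\in R(y)$ is within $M-1$ of that corner in each coordinate and $M<L$, the step $v_n = v_{n-1}+\delta$ can leave $C_L(y)$ in coordinate $i$ only when $v_{n-1}$ already sits at the extremal face in that coordinate and the step goes in direction $\epsilon(y)_i$. This gives $(z-y)_i\in\{0,\epsilon(y)_i\}$ for every $i$, with at least one coordinate nonzero, so $\|z-y\|_\infty=1$. The main geometric step is then to show $\|z-x\|_\infty\le 1$, so that $C_L(z)$ is either $C_L(x)$ or a $G$-box-neighbor of $C_L(x)$. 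For this I would use the hypothesis that $R(y)$ is adjacent to some corner region $R(x)$ of $C_L(x)$, together with $M<L$, to derive coordinate-wise constraints: if $(x-y)_i\ne 0$ then $\epsilon(y)_i=(x-y)_i$ and $\epsilon(x)_i=-(x-y)_i$ (otherwise the $i$-th ranges of $R(y)$ and $R(x)$ are separated by more than $1$); if $(x-y)_i=0$ then $\epsilon(y)_i=\epsilon(x)_i$ (opposite-side corner regions inside the same $i$-slab of a single box are separated by $2L-2M+2>1$). Substituting into $(z-x)_i=(z-y)_i-(x-y)_i$ with $(z-y)_i\in\{0,\epsilon(y)_i\}$ then gives $|(z-x)_i|\le 1$ coordinate by coordinate.

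Finally I would case-split on $C_L(z)$. If $C_L(z)$ is bad, we are in case~(1). Otherwise $C_L(z)$ is good, so the bad vertex $v_n$ must lie in some corner region $R(z)$ of $C_L(z)$; since $v_n\in R(z)$ and $v_{n-1}\in R(y)$ are $G$-neighbors with $z\ne y$, Definition~\ref{Adj} makes $R(z)$ adjacent to $R(y)$, which is case~(2). The main obstacle I expect is the coordinate-wise bookkeeping in the previous paragraph: systematically extracting the compatibility constraints between $\epsilon(y)$, $\epsilon(x)$ and $x-y$ from the adjacency of $R(y)$ and $R(x)$, and then combining them with $(z-y)_i\in\{0,\epsilon(y)_i\}$ to verify $|(z-x)_i|\le 1$ in every coordinate.
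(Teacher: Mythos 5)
Your argument is correct and follows the same overall plan as the paper's proof: locate the exit step, use Lemma~\ref{GdBxPth} to conclude the last vertex of $\gamma$ inside $C_L(y)$ lies in $R(y)$, and then show that the box $C_L(z)$ the path enters must be a $G$-box-neighbor of $C_L(x)$, splitting on whether $C_L(z)$ is good (enter a corner region adjacent to $R(y)$) or bad. The one place you diverge is that where the paper simply cites Lemma~\ref{ADJ} (the $G$-path-to-adjacency implication and transitivity of adjacency, whose proof the paper leaves to the reader) to conclude $C_L(z)$ neighbors $C_L(x)$, you instead prove that step directly by coordinate bookkeeping with the corner signs $\epsilon(y),\epsilon(x)$ and the exit direction, deriving $\|z-x\|_\infty\le 1$ explicitly; this is a slightly more self-contained verification of the same fact, and it also surfaces the minor edge case $z=x$ which, as in the paper's phrasing, is harmlessly elided since $C_L(x)$ is itself in the relevant bad $G$-box-cluster.
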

\begin{proof}
If the path exits $R(y)$ into a good box $C_L(z)$, then it must (by the reasoning 
of Lemma~\ref{GdBxPth}) enter into a corner region that is adjacent to $R(y)$ and
hence by Lemma~\ref{ADJ}, $C_L(z)$ is a $G$-box-neighbor of both $C_L(y)$ and
$C_L(x)$. If it exits into a bad box, $C_L(z')$, 
then it must enter into an adjacent corner region
$R(z')$ or into its $G$-boundary, but in either case, $C_L(z')$ is again a 
$G$-box-neighbor of both $C_L(y)$ and $C_L(x)$.
\end{proof}




The next proposition is an immediate consequence of the previous lemma
in the setting where $C_L(x)$ is in a $G$-box-cluster of bad boxes.

\begin{proposition}

There is no $G$-path of bad sites from a $G$-box 
cluster of bad boxes that exits the union of the $G$-box cluster and 
its $G$-box-boundary.

\end{proposition}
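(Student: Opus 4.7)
The plan is to prove the proposition by induction on the index of a bad-site $G$-path, using a strengthened invariant that supplies at each step the ``anchor'' needed for Lemma~\ref{AdjCorPth}. Let $\mathcal{K}$ denote the given $G$-box-cluster of bad boxes and $\partial\mathcal{K}$ its $G$-box-boundary, and set $U=\bigcup_{y\in\mathcal{K}\cup\partial\mathcal{K}}C_L(y)$. Fix a $G$-path of bad sites $\gamma_1,\gamma_2,\ldots$ with $\gamma_1\in C_L(y_0)$ for some $y_0\in\mathcal{K}$; the goal is to show $\gamma_i\in U$ for all $i$.

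The invariant $P(i)$ I propose is the following: either (a) $\gamma_i$ lies in some $C_L(z)$ with $z\in\mathcal{K}$, or (b) $\gamma_i$ lies in a corner region $R(y_i)$ of some $C_L(y_i)$ with $y_i\in\partial\mathcal{K}$, and $R(y_i)$ is adjacent to a corner region of some $C_L(x_i)$ with $x_i\in\mathcal{K}$. The base case $P(1)$ is immediate since $C_L(y_0)$ is bad. For the inductive step, suppose first that $P(i)$(a) holds. Then $\gamma_{i+1}$ either stays in $C_L(z)$, or it moves into a $G$-box-neighbor $C_L(w)$. If $C_L(w)$ is bad then $w\in\mathcal{K}$; if $C_L(w)$ is good then $w\in\partial\mathcal{K}$, and since a bad site in a good box must lie in a corner region, $\gamma_{i+1}\in R(w)$ for some $R(w)$, and a direct geometric check from the $G$-box-neighbor relation between $C_L(z)$ and $C_L(w)$ shows $R(w)$ is adjacent to the facing corner region of $C_L(z)$, yielding $P(i+1)$(b) with anchor $x_{i+1}=z$.

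Now suppose $P(i)$(b) holds. While $\gamma$ remains inside $C_L(y_i)$, it must remain inside $R(y_i)$, because distinct corner regions of a good box have disjoint $G$-boundaries when $M<L$, so the path cannot jump between them via a single $G$-edge. When $\gamma$ first exits $C_L(y_i)$, Lemma~\ref{AdjCorPth} applied with $y=y_i$ and $x=x_i$ guarantees that the new box is either a bad box $G$-box-adjacent to $C_L(x_i)$ (hence in $\mathcal{K}$, so $P(i+1)$(a) holds), or a corner region $R(y_{i+1})$ of a good box $C_L(y_{i+1})$ that is itself $G$-box-adjacent to $C_L(x_i)$ (so $y_{i+1}\in\partial\mathcal{K}$) and with $R(y_{i+1})$ adjacent to $R(y_i)$. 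In the latter case, transitivity of adjacency (Lemma~\ref{ADJ}) gives $R(y_{i+1})$ adjacent to $R(x_i)$, so $P(i+1)$(b) holds with the same persistent anchor $x_{i+1}=x_i\in\mathcal{K}$. Once $P(i)$ is established for all $i$, the conclusion $\gamma_i\in U$ is immediate.

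The main obstacle is the bookkeeping across consecutive steps that traverse several good boxes of $\partial\mathcal{K}$ in a row: we must carry along a persistent anchor $x_i\in\mathcal{K}$ so that the adjacency hypothesis of Lemma~\ref{AdjCorPth} is available every time the path exits a good box, and it is exactly the transitivity of adjacency afforded by Lemma~\ref{ADJ} that makes this propagation of the anchor possible.
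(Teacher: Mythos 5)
Your proof is correct and matches the paper's intent: the paper merely declares the proposition an ``immediate consequence'' of Lemma~\ref{AdjCorPth}, and your anchored induction, using the transitivity of adjacency from Lemma~\ref{ADJ} to keep the anchor $x_i\in\mathcal{K}$ alive across consecutive good boxes of $\partial\mathcal{K}$, is exactly the iteration the paper intends. One small cosmetic note: the ``direct geometric check'' in the (a)$\to$(b) step is itself just the first bullet of Lemma~\ref{ADJ} applied to the one-edge $G$-path $\gamma_i\gamma_{i+1}$, so no separate geometric argument is needed there.
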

%
%
%

\begin{proof}[Proof of Theorem 3.4]
\noindent
The proof uses the version of Proposition~\ref{schonmann} for Schonmann's
{\bf modified basic version} of bootstrap percolation as discussed in
Example~3 of~\cite{S}.
We will focus on the case $d-1=2$. The case $d-1=1$ is easier and can
be handled without the use of bootstrap percolation. We leave it for the reader
to check that the proof for the cases with $d-1 \geq 3$  proceed essentially
the same as for $d-1=2$.

For $d-1=2$, we first partition  $\mathbb{Z}^2\times \{0, 1, 2, ...K\}$  into disjoint
$2\times 2\times (K+1)$ pillars \\
$\mathcal P_{i,j}= \{ (2i, 2j), (2i+1, 2j), (2i, 2j+1), (2i+1, 2j+1)\}\times\{0, 1, 2, ..., K\}$ for $(i,j) \in Z^2$.
If at any time, all vertices in $\mathcal P_{i,j}$ are $+1$, then
they stay $+1$ after that since the bottom layer is frozen to $+1$,
each site in the top layer has 3 neighbors (within the pillar) out of
its 5 total neighbors equal to $+1$, and all other sites have 4 neighbors
(within the pillar) out of its 6 total neighbors equal to $+1$.

A pillar $\mathcal P_{i,j}$ is fixed $+1$ in this way at time zero, with
probability $\theta^{4K}$ and the set of such $(i,j)$ are chosen independently
of each other. We can now apply the modified Proposition~\ref{schonmann} as long as
we can show that if a pillar $\mathcal P_{i,j}$ (at time $t$) has at
least two neighboring pillars -- one in each coordinate direction -- all
$+1$ (at time $t$), then almost surely by some random time $t+T$,
$P_{i, j}$ will have become all $+1$.

This last claim is verified by first arguing, like in the proof of
Lemma~\ref{lemmaboostrapweak}, there is strictly positive probability that $\mathcal P_{i,j}$
will be all $+1$ by the time $t+1$, and then proceeding as in the proof of Lemma~\ref{lemmabootstrap}.
To explain the first argument, suppose the all $+1$
neighboring pillars are $\mathcal P_{i-1,j}$ and $\mathcal P_{i,j-1}$.
Then the vertices in the first layer (above the
frozen sites), $(2i, 2j, 1), (2i+1, 2j, 1), (2i, 2j+1, 1), (2i+1, 2j+1, 1)$
can flip to $+1$ (if they are not already $+1$) in that order followed by
the sites in layers $2, 3, \ldots, K$ in that order.
\end{proof}

\bigskip

{\bf Acknowledgments.}
The authors thank Leo T. Rolla for many fruitful discussions. The research reported in this paper was supported in part by NSF grants 
DMS-1007524 (S.E. and C.M.N.), DMS-1419230 (M.D.) and OISE-0730136 (S.E.,
H.K. and C.M.N.). V.S. was supported by ESF-RGLIS network and  by
Brazilian CNPq grants 308787/2011-0 and 476756/2012-0 and FAPERJ grant
E-26/102.878/2012-BBP.

\bigskip

\end{document}